\theoremstyle{plain}
\newtheorem{theorem}{Theorem}[section]
\newtheorem{corollary}[theorem]{Corollary}
\newtheorem{proposition}[theorem]{Proposition}
\newtheorem{conjecture}[theorem]{Conjecture}
\newtheorem{lemma}[theorem]{Lemma}
\newtheorem{rem}[theorem]{Remark}
\numberwithin{equation}{section}
\def\op{\operatorname}
\newcommand{\N}{\mathbb{N}}
\newcommand{\C}{\mathbb{C}}
\newcommand{\R}{\mathbb{R}}
\begin{document}

\title{Some Properties of a Class of Sparse Polynomials}

\author{Karl Dilcher}
\address{Department of Mathematics and Statistics\\
         Dalhousie University\\
         Halifax, Nova Scotia, B3H 4R2, Canada}
\email{dilcher@mathstat.dal.ca}

\author{Maciej Ulas}
\address{Jagiellonian University, Faculty of Mathematics and Computer Science,
Institute of Mathematics, \L{}ojasiewicza 6, 30-348 Krak\'ow, Poland}
\email{Maciej.Ulas@im.uj.edu.pl}

\keywords{Polynomial, monotonicity, log-concavity, differential-difference
equation, zero distribution}
\subjclass[2020]{Primary 33E99; Secondary 30C15}
\thanks{Research of the first author supported in part by the Natural Sciences and Engineering
        Research Council of Canada, Grant \# 145628481. Research of the second author supported by a grant of the Polish National Science Centre, no. UMO-2019/34/E/ST1/0009}

\date{}

\setcounter{equation}{0}

\begin{abstract}
We study an infinite class of sequences of sparse polynomials that have binomial
coefficients both as exponents and as coefficients. This generalizes a sequence
of sparse polynomials which arises in a natural way as graph theoretic
polynomials. After deriving some basic identities, we obtain properties
concerning monotonicity and log-concavity, as well as identities involving
derivatives. We also prove upper and lower bounds on the moduli of the zeros
of these polynomials.
\end{abstract}

\maketitle

\section{Introduction}\label{sec1}

A {\it sparse polynomial\/} (in one variable) is usually defined to be a
polynomial in which the number of nonzero coefficients is small compared with
its degree, where it often depends on the context what ``small" means. As far
as notation is concerned, sparse polynomials are normally given through its
nonzero coefficients:
\[
f(z) = \sum_{j=0}^n c_j z^{\alpha_j},\qquad
0\leq\alpha_0<\alpha_1<\cdots <\alpha_n,
\]
where $c_j\neq 0$ for $0\leq j\leq n$ and the integer sequence $(\alpha_j)$
often, but not necessarily, has greater than linear growth in $j$. Also, we
clearly have deg$(f)=\alpha_n$. Sparse polynomials are applied in computer
algebra, cryptography, approximation and interpolation, and various areas of
applied mathematics.

The starting point of this paper is a specific sequence of sparse polynomials
which arises naturally from a graph theoretic question related to the expected
number of independent sets of a graph \cite{BDM2}. These polynomials are defined
by
\begin{equation}\label{1.1}
f_n(z) := \sum_{j=0}^{n}\binom{n}{j}z^{j(j-1)/2}.
\end{equation}
Various properties, including asymptotics, zero distribution, and number
theoretic properties, can be found in \cite{BDM1}--\cite{GN}.

In this paper we extend the polynomials in \eqref{1.1} to the class of
polynomials
\begin{equation}\label{1.2}
f_{m,n}(z) := \sum_{j=0}^{n}\binom{n}{j}z^{\binom{j}{m}},
\end{equation}
where we will usually consider the integer $m\geq 1$ as a fixed parameter, and
then study the sequence $(f_{m,n}(z))_{n\geq 0}$. It is clear from \eqref{1.2}
that $f_{1,n}(z)=(1+z)^n$ and $f_{2,n}(z)=f_n(z)$. At the other extreme, we have
$f_{m,n}(z)=2^n$ when $n<m$, and for all $m\geq 1$,
\begin{equation}\label{1.2a}
f_{m,m}(z) = z+2^m-1,\quad f_{m,m+1}(z)=z^{m+1}+(m+1)z+\big(2^{m+1}-m-2\big).
\end{equation}
Furthermore, we have
\begin{equation}\label{1.2b}
f_{m,n}(0) = \sum_{j=0}^{m-1}\binom{n}{j},\qquad f_{m,n}(1)=2^n.
\end{equation}

Some of the more basic results on the polynomials $f_{m,n}(z)$ can be stated
and proved in greater generality, which we will do whenever possible and
reasonable. More precisely, let ${\bf h}=(h_j)_{j\geq 0}$ be an eventually
increasing sequence of nonnegative integers. Then for each integer $n\geq 0$
we define the polynomial
\begin{equation}\label{1.3}
H_{n}^{\bf h}(z) = \sum_{j=0}^{n}\binom{n}{j}z^{h_{j}}.
\end{equation}
For each $m\geq 1$ we obviously have
\[
H_{n}^{\bf h}(z) = f_{m,n}(z)\quad\hbox{when}\quad h_{j} = \binom{j}{m}.
\]

In Section~2 we will derive various basic properties of these polynomials,
usually also specialized to the polynomials $f_{m,n}(z)$. Section~3 is then
devoted to monotonicity properties of the sequence
$(H_{n}^{\bf h}(z))_{n\geq 0}$ and to the log-concavity of $(f_{m,n}(z))_n$.
In Section~4 we find some
identities connecting the polynomials $f_{m,n}(z)$ with their derivatives, and
also obtain a partial differential equation satisfied by their generating
function. In Section~5 we derive bounds on the moduli of the roots of
$f_{m,n}(z)$, and actually prove somewhat more general results. We conclude
this paper with some remarks and conjectures on the real roots of $f_{m,n}(z)$.

\section{Some basic identities}

We begin this section with an identity connecting two infinite series. It can
also be seen as a generating function of the sequence $H_{n}^{\bf h}(z)$.

\begin{proposition}\label{prop:2.1}
Let ${\bf h}$ be an eventually increasing sequence of nonnegative integers.
Then for $|z|<1$ and $|t|\leq 1/2$ we have
\begin{equation}\label{2.1}
\sum_{n=0}^{\infty}H_{n}^{\bf h}(z)t^{n}
=\frac{1}{1-t}\sum_{j=0}^{\infty}\left(\frac{t}{1-t}\right)^{j}z^{h_{j}}.
\end{equation}
\end{proposition}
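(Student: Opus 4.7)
The plan is straightforward: substitute the series definition \eqref{1.3} of $H_{n}^{\bf h}(z)$ into the left-hand side of \eqref{2.1}, swap the order of the two summations, and evaluate the resulting inner sum using the classical identity
$$
\sum_{n=j}^{\infty}\binom{n}{j}t^{n}=\frac{t^{j}}{(1-t)^{j+1}},\qquad |t|<1.
$$
Concretely, one writes
$$
\sum_{n=0}^{\infty}H_{n}^{\bf h}(z)t^{n}
=\sum_{n=0}^{\infty}\sum_{j=0}^{n}\binom{n}{j}z^{h_{j}}t^{n}
=\sum_{j=0}^{\infty}z^{h_{j}}\sum_{n=j}^{\infty}\binom{n}{j}t^{n}
=\sum_{j=0}^{\infty}\frac{z^{h_{j}}t^{j}}{(1-t)^{j+1}},
$$
and then factors out $1/(1-t)$ to obtain exactly the right-hand side of \eqref{2.1}. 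The algebraic content is thus essentially a one-line manipulation.

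The only step requiring genuine care is the interchange of the two infinite summations, so I would concentrate effort there. I would invoke Tonelli/Fubini after verifying absolute convergence of the double series. For $|t|\leq 1/2$ one has $|1-t|\geq 1-|t|\geq 1/2$, and summing first over $n$ gives
$$
\sum_{j=0}^{\infty}\sum_{n=j}^{\infty}\binom{n}{j}|z|^{h_{j}}|t|^{n}
=\frac{1}{1-|t|}\sum_{j=0}^{\infty}|z|^{h_{j}}\left(\frac{|t|}{1-|t|}\right)^{j}.
$$
Since $\bf h$ is eventually increasing and integer-valued, we have $h_{j}\to\infty$ at least linearly in $j$, so $|z|^{h_{j}}$ decays geometrically in $j$ because $|z|<1$. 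Combined with the bound $|t|/(1-|t|)\leq 1$ on the prescribed disk, the last series is dominated by a convergent geometric series, which legitimizes the swap.

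The main (mild) obstacle is therefore the convergence bookkeeping right at the boundary $|t|=1/2$, where the factor $|t|/(1-|t|)$ equals $1$ and the argument cannot rely on that factor alone; it is precisely the eventual monotonicity of $\bf h$ together with the hypothesis $|z|<1$ that forces absolute convergence in that case. Once this is handled, the proposition follows immediately from the chain of equalities above.
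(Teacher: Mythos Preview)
Your proof is correct and follows essentially the same approach as the paper: substitute \eqref{1.3}, interchange summations, and evaluate the inner sum via $\sum_{n\geq j}\binom{n}{j}t^{n}=t^{j}/(1-t)^{j+1}$. Your justification of the interchange is in fact more careful than the paper's, which simply remarks that $|H_{n}^{\bf h}(z)|<2^{n}$ for $|z|<1$ and asserts that the swap is legitimate for $|t|\leq 1/2$; your explicit use of the eventual linear growth of $h_{j}$ to handle the boundary case $|t|=1/2$ fills in exactly the detail the paper leaves implicit.
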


\begin{proof}
For $|z|<1$ the definition \eqref{1.3} shows that $|H_{n}^{\bf h}(z)|<2^n$.
If we then substitute \eqref{1.3} on the left of \eqref{2.1}, the following
change in the order of summation can be justified for $|t|\leq 1/2$:
\begin{align*}
\sum_{n=0}^{\infty}H_{n}^{\bf h}(z)t^{n}
&=\sum_{n=0}^{\infty}\sum_{j=0}^{n}\binom{n}{j}z^{h_{j}}t^{n}
=\sum_{j=0}^{\infty}z^{h_{j}}\sum_{n=j}^{\infty}\binom{n}{j}t^{n}\\
&=\sum_{j=0}^{\infty}t^{j}z^{h_{j}}
\left(\sum_{n=0}^{\infty}\binom{n+j}{j}t^{n}\right).
\end{align*}
Now the sum in large parentheses in the last term has the well-known evaluation
$1/(1-t)^{j+1}$, which immediately leads to the right-hand side of \eqref{2.1}.
\end{proof}

As a consequence of Proposition~\ref{prop:2.1} we get the following generating
function.

\begin{corollary}\label{cor:2.2}
For any integer $m\geq 1$ and for variables $z,t$ with $|z|<1$ and
$|t|\leq 1/2$ we have
\begin{equation}\label{2.2}
\sum_{n=0}^{\infty}f_{m,n}(z)t^{n}
=\frac{1}{1-t}\sum_{j=0}^{\infty}\left(\frac{t}{1-t}\right)^{j}z^{\binom{j}{m}}.
\end{equation}
\end{corollary}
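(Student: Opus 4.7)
The plan is to derive Corollary~\ref{cor:2.2} as a direct specialization of Proposition~\ref{prop:2.1}. The key observation is that the polynomial $f_{m,n}(z)$ in \eqref{1.2} is literally of the form $H_n^{\mathbf{h}}(z)$ defined in \eqref{1.3}, provided we choose the sequence $\mathbf{h} = (h_j)_{j\geq 0}$ with $h_j = \binom{j}{m}$. So the work reduces to checking the hypothesis of the proposition for this particular $\mathbf{h}$ and then quoting the conclusion.

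First, I would verify that $\mathbf{h}$ is an eventually increasing sequence of nonnegative integers. Indeed, $\binom{j}{m} = 0$ for $0 \leq j < m$, and for $j \geq m$ the function $j \mapsto \binom{j}{m}$ is strictly increasing; moreover each $\binom{j}{m}$ is a nonnegative integer. Hence the hypothesis of Proposition~\ref{prop:2.1} is satisfied. Next, I would note that substituting $h_j = \binom{j}{m}$ into the definition \eqref{1.3} gives exactly
\[
H_n^{\mathbf{h}}(z) = \sum_{j=0}^{n}\binom{n}{j} z^{\binom{j}{m}} = f_{m,n}(z),
\]
as the paper already remarks immediately after \eqref{1.3}.

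Finally, applying Proposition~\ref{prop:2.1} with this choice of $\mathbf{h}$, for $|z| < 1$ and $|t| \leq 1/2$, directly yields
\[
\sum_{n=0}^{\infty} f_{m,n}(z) t^n
= \sum_{n=0}^{\infty} H_n^{\mathbf{h}}(z) t^n
= \frac{1}{1-t} \sum_{j=0}^{\infty} \left(\frac{t}{1-t}\right)^j z^{\binom{j}{m}},
\]
which is precisely \eqref{2.2}. Since this is a pure specialization, there is essentially no obstacle; the only thing that requires a moment's attention is confirming that the chosen sequence $\binom{j}{m}$ satisfies the mild monotonicity hypothesis on $\mathbf{h}$, but as noted this is immediate.
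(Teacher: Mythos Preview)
Your proof is correct and follows exactly the approach intended in the paper: the corollary is stated immediately after Proposition~\ref{prop:2.1} as a direct specialization with $h_j=\binom{j}{m}$, and the paper gives no separate argument beyond this. The only additional detail you supply---checking that $(\binom{j}{m})_{j\geq 0}$ is an eventually increasing sequence of nonnegative integers---is the obvious verification the paper leaves implicit.
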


For $m=1$, both sides are geometric series that are easily seen to be equal.
For $m=2$, the identity \eqref{2.2} reduces to Lemma~2.1 in \cite{BDM1}, which
was used to obtain certain identities for theta functions.

If we set $t=1/2$ in \eqref{2.1}, we get the following identity.

\begin{corollary}\label{cor:2.3}
For a sequence ${\bf h}$ as above and $|z|<1$, we have
\begin{equation}\label{2.3}
\sum_{j=0}^{\infty}z^{h_{j}}
=\sum_{n=0}^{\infty}\frac{1}{2^{n+1}}H^{\bf h}_{n}(z).
\end{equation}
\end{corollary}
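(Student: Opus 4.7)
The plan is to derive this identity as an immediate consequence of Proposition~\ref{prop:2.1} by specializing the free parameter to the boundary value $t=1/2$, which is precisely the case where the geometric ratio $t/(1-t)$ collapses to $1$.

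First I would verify that $t=1/2$ is admissible in \eqref{2.1}. The range stated in Proposition~\ref{prop:2.1} is $|t|\leq 1/2$, so the boundary is included, but one should double-check convergence on both sides at this boundary. On the right, the factor $t/(1-t)$ becomes $1$, so the geometric series in $j$ turns into $\sum_j z^{h_j}$; since $|z|<1$ and the sequence ${\bf h}$ is eventually increasing of nonnegative integers, $|z|^{h_j}\to 0$ geometrically, and the series converges absolutely. On the left, the bound $|H_n^{\bf h}(z)|\leq \sum_{j=0}^n\binom{n}{j}|z|^{h_j}$ together with $|z|<1$ shows that $|H_n^{\bf h}(z)|/2^n$ is summable (one can estimate, for instance by splitting off finitely many terms and using $|z|^{h_j}\leq |z|^{h_{j_0}}$ for large $j$, which makes the tail behave like $|z|^{h_{j_0}}\cdot 2^n/2^n$, summable after refining; more cleanly, the justification of term rearrangement given in the proof of Proposition~\ref{prop:2.1} is itself valid at $t=1/2$ under the hypothesis $|z|<1$).

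Second, I would substitute $t=1/2$ directly into \eqref{2.1}. Then $1/(1-t)=2$ and $t/(1-t)=1$, so the right-hand side becomes
\[
2\sum_{j=0}^{\infty}z^{h_{j}},
\]
while the left-hand side becomes $\sum_{n=0}^{\infty}H_{n}^{\bf h}(z)/2^{n}$. Dividing by $2$ rearranges this into \eqref{2.3}.

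The main (and essentially only) obstacle is the convergence check at the boundary $t=1/2$, because the double sum in the proof of Proposition~\ref{prop:2.1} is only barely absolutely convergent there; everything else is a direct substitution. No further calculation is required.
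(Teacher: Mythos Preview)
Your approach is correct and is exactly the paper's own: the corollary is obtained simply by setting $t=1/2$ in \eqref{2.1}, which is allowed since Proposition~\ref{prop:2.1} is stated for $|t|\leq 1/2$. The extra convergence discussion you give is fine but unnecessary, since the validity at $t=1/2$ is already part of the hypothesis of Proposition~\ref{prop:2.1}.
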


When $h_j=2^j$ for $j\geq 0$, then the left-hand side of \eqref{2.3}is a
well-known power series which has the unit circle as a natural boundary; see
\cite{Ma}. If we take $h_j=\binom{j}{2}$, we get Theorem~3.1 in \cite{BDM1},
namely
\[
\sum_{n=0}^{\infty}2^{-n}f_n(q^2) = 2 + q^{-1/4}\theta_2(q)\quad (|q|<1),
\]
where $f_n(z)$ is defined by \eqref{1.1} and $\theta_2(q)$ is the Jacobi theta
function
\[
\theta_2(q)=\theta_2(0,q)=2\sum_{n=0}^{\infty}q^{(n+\frac{1}{2})^2}
=2q^{1/4}\sum_{n=1}^{\infty}q^{n(n-1)};
\]
see, e.g., \cite[Ch.~20]{DLMF}.

The next identity is, in a sense, a finite analogue of
Proposition~\ref{prop:2.1}.

\begin{proposition}\label{prop:2.4}
For any integer $n\geq 0$ and complex variables $t$ and $z$ we have
\begin{equation}\label{2.4}
\sum_{k=0}^{n}\binom{n}{k}t^{k}H^{\bf h}_{k}(z)
=\sum_{j=0}^{n}\binom{n}{j}t^j(1+t)^{n-j}z^{h_{j}}.
\end{equation}
\end{proposition}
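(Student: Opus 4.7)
The plan is to substitute the definition \eqref{1.3} of $H_k^{\bf h}(z)$ on the left-hand side of \eqref{2.4} and then switch the order of summation, since both sums are finite and no convergence issues arise. After substitution the left-hand side becomes
\[
\sum_{k=0}^{n}\binom{n}{k}t^{k}\sum_{j=0}^{k}\binom{k}{j}z^{h_j}
=\sum_{j=0}^{n}z^{h_j}\sum_{k=j}^{n}\binom{n}{k}\binom{k}{j}t^{k}.
\]

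The next step is to apply the standard trinomial revision identity
\[
\binom{n}{k}\binom{k}{j}=\binom{n}{j}\binom{n-j}{k-j},
\]
which lets me pull $\binom{n}{j}$ out of the inner sum. After shifting the summation index by $k\mapsto k-j$ and factoring out $t^{j}$, the inner sum becomes $\sum_{i=0}^{n-j}\binom{n-j}{i}t^{i}=(1+t)^{n-j}$ by the binomial theorem. Collecting terms yields exactly the right-hand side of \eqref{2.4}.

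Since both sides of \eqref{2.4} are polynomials in $t$ and $z$ (once we treat the $z^{h_j}$ formally), the identity holds for all complex $t,z$, so no domain restrictions are needed. There is no real obstacle here: the proof is a direct double-sum manipulation of exactly the same flavor as the proof of Proposition~\ref{prop:2.1}, only finite. The only small thing to be careful about is getting the bounds of summation right after the index swap (the inner sum starts at $k=j$, not $k=0$), and making sure the trinomial revision is applied correctly so that the surviving factor is $\binom{n}{j}$ rather than $\binom{n}{k}$.
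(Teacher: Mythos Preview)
Your proof is correct and follows essentially the same approach as the paper: substitute the definition, swap the order of summation, and evaluate the inner sum $\sum_{k=j}^{n}\binom{n}{k}\binom{k}{j}t^{k}=\binom{n}{j}t^{j}(1+t)^{n-j}$. The only difference is cosmetic: the paper cites this inner-sum evaluation as a known identity from Gould's tables, whereas you derive it on the spot via trinomial revision and the binomial theorem, which is in fact the standard proof of that identity.
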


\begin{proof}
We use the definition \eqref{1.3} and change the order of summation; then the
left-hand side of \eqref{2.4} becomes
\[
\sum_{k=0}^{n}\binom{n}{k}t^{k}H^{\bf h}_{k}(z)
=\sum_{k=0}^{n}\binom{n}{k}t^{k}\sum_{j=0}^{k}\binom{k}{j}z^{h_{j}}
=\sum_{j=0}^{n}\left(\sum_{k=j}^{n}\binom{n}{k}\binom{k}{j}t^{k}\right)z^{h_{j}}.
\]
The inner sum on the right has the known evaluation
$\binom{n}{j}t^{j}(1+t)^{n-j}$; see, e.g., \cite[Eq.~(3.117)]{Go}. This
completes the proof.
\end{proof}

If we set $t=-1/2$ in \eqref{2.4} and compare the right-hand side with
\eqref{1.3}, we get the following consequence.

\begin{corollary}\label{cor:2.5}
Let $n\geq 0$ be an integer. If $h_j$ has the same parity as $j$ for all
integers $0\leq j\leq n$, then
\begin{equation}\label{2.5}
H^{\bf h}_{n}(-z)
=2^n\sum_{k=0}^{n}\binom{n}{k}\big(-\tfrac{1}{2}\big)^{k}H^{\bf h}_{k}(z).
\end{equation}
\end{corollary}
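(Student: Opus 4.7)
The plan is to follow the hint and specialize Proposition~\ref{prop:2.4} at $t=-1/2$, then use the parity hypothesis to absorb the sign $(-1)^j$ into the exponent $h_j$.

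First, I would substitute $t=-1/2$ into \eqref{2.4}. The left-hand side becomes exactly
$\sum_{k=0}^{n}\binom{n}{k}(-\tfrac12)^k H^{\bf h}_k(z)$,
which already matches the sum appearing on the right of \eqref{2.5} (up to the factor $2^n$). So the work is entirely on the right-hand side of \eqref{2.4}. With $t=-1/2$ the factor $t^j(1+t)^{n-j}$ becomes $(-1/2)^j (1/2)^{n-j} = 2^{-n}(-1)^j$, so the right-hand side simplifies to
$2^{-n}\sum_{j=0}^{n}\binom{n}{j}(-1)^j z^{h_j}$.

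Next, I would invoke the parity hypothesis: since $h_j\equiv j\pmod 2$ for each $0\le j\le n$, we have $(-1)^j = (-1)^{h_j}$, hence $(-1)^j z^{h_j}=(-z)^{h_j}$. Therefore
$2^{-n}\sum_{j=0}^{n}\binom{n}{j}(-1)^j z^{h_j} = 2^{-n}\sum_{j=0}^{n}\binom{n}{j}(-z)^{h_j} = 2^{-n}H^{\bf h}_n(-z)$
by the definition \eqref{1.3}. Equating the two sides and multiplying through by $2^n$ yields \eqref{2.5}.

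There is really no obstacle here; the only point requiring a moment of care is the parity step, which is exactly what converts the alternating sign $(-1)^j$ into the substitution $z\mapsto -z$ inside the monomial $z^{h_j}$. In particular, this hypothesis is automatically satisfied when $h_j=\binom{j}{m}$ with $m=2$ (since $\binom{j}{2}=j(j-1)/2$ has the same parity as $j$ for $j\equiv 0,1\pmod 4$ only, so this specialization would need to be checked separately and is \emph{not} claimed by the corollary in general), so the statement is stated at the correct level of generality for the underlying sequence ${\bf h}$.
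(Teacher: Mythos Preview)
Your proof is correct and follows exactly the approach indicated in the paper: substitute $t=-\tfrac12$ in Proposition~\ref{prop:2.4}, simplify $t^j(1+t)^{n-j}=2^{-n}(-1)^j$, and use the parity hypothesis to rewrite $(-1)^j z^{h_j}=(-z)^{h_j}$ so that the right-hand side becomes $2^{-n}H_n^{\bf h}(-z)$.

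One small quibble about your closing paragraph (which is tangential to the proof itself): the sentence is self-contradictory, and the residue classes are off. For $h_j=\binom{j}{2}=j(j-1)/2$, the parity matches that of $j$ precisely when $j\equiv 0,3\pmod 4$, not $0,1\pmod 4$; so the hypothesis is \emph{not} automatically satisfied for $m=2$, as your parenthetical correctly concludes despite the opening clause saying the opposite. This does not affect the validity of your argument for the corollary.
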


To motivate our next result, we set $t=-1$ in \eqref{2.4}. Then on the
right-hand side only the term with $j=n$ is nonzero, and we get
\begin{equation}\label{2.6}
\sum_{k=0}^{n}\binom{n}{k}(-1)^{k}H^{\bf h}_{k}(z)
= (-1)^nz^{h_{n}}.
\end{equation}
If we consider \eqref{1.3} as the binomial transformation of
the sequence $z^{h_n}$, then \eqref{2.6} can, in fact, be seen as the inverse
transformation.
Next, by taking the derivative with respect to $t$ of both sides of \eqref{2.4}
and then setting $t=-1$, we get
\begin{equation}\label{2.7}
\sum_{k=0}^{n}\binom{n}{k}(-1)^{k}kH^{\bf h}_{k}(z)
= (-1)^nn\left(z^{h_{n}}+z^{h_{n-1}}\right).
\end{equation}
These are the two smallest cases of the following result.

\begin{proposition}\label{prop:2.6}
Let $\nu\geq 0$ be an integer. Then for all integers $n\geq 0$ we have
\begin{equation}\label{2.8}
\sum_{k=0}^{n}\binom{n}{k}(-1)^{k}k^{\nu}H^{\bf h}_{k}(z)
=\sum_{i=0}^{\nu}a_{i,\nu}(n)z^{h_{n-i}},
\end{equation}
where $a_{i,\nu}, i=0, 1, \ldots, \nu,$ are polynomials in $n$ of degree $\nu$
and with integer coefficients.
\end{proposition}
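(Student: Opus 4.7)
The plan is to iterate the procedure that led to \eqref{2.6} and \eqref{2.7}: apply $D^\nu$ with $D=d/dt$ to both sides of the identity \eqref{2.4} in Proposition~\ref{prop:2.4} and then evaluate at $t=-1$. Since $D^\nu t^k = k^{\underline{\nu}}\, t^{k-\nu}$, where $k^{\underline{\nu}} = k(k-1)\cdots(k-\nu+1)$ is the falling factorial, the left-hand side of \eqref{2.4} becomes $(-1)^{\nu}\sum_{k=0}^n \binom{n}{k}(-1)^{k} k^{\underline{\nu}} H_k^{\bf h}(z)$ after pulling out $(-1)^{-\nu}=(-1)^\nu$ from $t^{k-\nu}\big|_{t=-1}$.

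The core computation is on the right-hand side. Leibniz's rule yields
\[
D^\nu\bigl[t^{j}(1+t)^{n-j}\bigr]
= \sum_{s=0}^{\nu} \binom{\nu}{s}\, j^{\underline{s}}\, t^{j-s}\, (n-j)^{\underline{\nu-s}}\, (1+t)^{n-j-\nu+s}.
\]
At $t=-1$ the factor $(1+t)^{n-j-\nu+s}$ forces the exponent to vanish, pinning the unique surviving index $s=\nu+j-n$ and restricting $j$ to the $\nu+1$ values $n-\nu\le j\le n$. Using the simplification $\binom{n}{j}\cdot j!\,(n-j)!/(n-\nu)! = n^{\underline{\nu}}$ and reindexing $i=n-j$, the identity collapses to the clean form
\[
\sum_{k=0}^{n} \binom{n}{k}(-1)^{k} k^{\underline{\nu}} H_k^{\bf h}(z)
= (-1)^{n}\, n^{\underline{\nu}} \sum_{i=0}^{\nu} \binom{\nu}{i}\, z^{h_{n-i}}.
\]

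To pass from the falling factorial $k^{\underline{\nu}}$ to the ordinary power $k^\nu$, I would apply the standard expansion $k^\nu=\sum_{r=0}^{\nu} S(\nu,r)\, k^{\underline{r}}$, where $S(\nu,r)$ are Stirling numbers of the second kind. Substituting into the identity above and interchanging summations gives
\[
a_{i,\nu}(n) = (-1)^{n} \sum_{r=i}^{\nu} S(\nu,r)\binom{r}{i}\, n^{\underline{r}},
\]
matching the form of \eqref{2.6}--\eqref{2.7}, in which the $(-1)^n$ factor is absorbed into $a_{i,\nu}$. Since each $n^{\underline{r}}$ is a monic integer-coefficient polynomial in $n$ of degree $r$, the polynomial part of $a_{i,\nu}$ has integer coefficients; the $r=\nu$ term contributes the leading monomial $\binom{\nu}{i}\,n^\nu$, which is nonzero for every $0\le i\le \nu$, so the degree is exactly $\nu$.

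The main obstacle is the Leibniz evaluation at $t=-1$: one must correctly identify which single value of $s$ contributes, check that the range of $j$ collapses to $\{n-\nu,\ldots,n\}$, and verify that the surviving binomial-factorial product telescopes to $n^{\underline{\nu}}$. Everything afterwards---the Stirling inversion and the degree/integrality verification---is routine bookkeeping.
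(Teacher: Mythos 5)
Your argument is correct, but it takes a genuinely different route from the paper's. The paper proves Lemma~\ref{lem:2.7}, the vanishing of $\sum_{k}(-1)^k\binom{n}{k}\binom{k}{j}k^{\nu}$ for $j\leq n-\nu-1$, by a direct binomial manipulation (reducing to the classical fact that $\sum_k(-1)^k\binom{N}{k}k^i=0$ for $i<N$), and then simply expands $H_k^{\bf h}$ in \eqref{2.8} and swaps the order of summation; it stops there, without exhibiting the surviving coefficients. You instead carry out the program the paper only uses as motivation: differentiate \eqref{2.4} $\nu$ times and set $t=-1$. Your Leibniz computation is right --- the factor $(1+t)^{n-j-\nu+s}$ kills everything except $s=\nu+j-n$ with $n-\nu\leq j\leq n$, and the product $\binom{n}{j}\,j^{\underline{s}}\,(n-j)^{\underline{\nu-s}}$ does collapse to $n^{\underline{\nu}}$ --- and the resulting falling-factorial identity
\[
\sum_{k=0}^{n}\binom{n}{k}(-1)^{k}k^{\underline{\nu}}H_{k}^{\bf h}(z)
=(-1)^{n}\,n^{\underline{\nu}}\sum_{i=0}^{\nu}\binom{\nu}{i}z^{h_{n-i}}
\]
is a clean statement the paper never records. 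The Stirling inversion then gives an explicit closed form for the $a_{i,\nu}(n)$, from which the degree-$\nu$ and integrality claims follow transparently (the $r=\nu$ term supplies the nonvanishing leading coefficient $\binom{\nu}{i}$); the paper's proof leaves these claims to the reader. The two arguments are cousins --- the vanishing of $(1+t)^{n-j-\nu+s}$ at $t=-1$ encodes the same cancellation as Lemma~\ref{lem:2.7} --- but yours is more informative. One shared caveat: both derivations produce a factor $(-1)^{n}$ in $a_{i,\nu}(n)$, so the proposition's phrase ``polynomials in $n$'' is only literally true up to that sign; you handle this correctly by noting, consistently with \eqref{2.6} and \eqref{2.7}, that the sign is absorbed into $a_{i,\nu}$.
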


We see immediately that \eqref{2.6} and \eqref{2.7} are the special cases
$\nu=0,1$ of \eqref{2.8}. A more explicit form of the right-hand side of
\eqref{2.8} will appear in its proof. For the proof we require the following
lemma.

\begin{lemma}\label{lem:2.7}
Let $\nu\geq 0$ be an integer. Then for all $n\geq 1$ we have
\begin{equation}\label{2.9}
\sum_{k=0}^n(-1)^k\binom{n}{k}\binom{k}{j}k^{\nu} = 0\quad\hbox{for}\quad
j=0, 1,\ldots, n-\nu-1.
\end{equation}
\end{lemma}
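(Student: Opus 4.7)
The plan is to reduce the sum to a classical statement about finite differences of polynomials. The starting observation is the standard binomial swap
\[
\binom{n}{k}\binom{k}{j} = \binom{n}{j}\binom{n-j}{k-j},
\]
which pulls the factor $\binom{n}{j}$ outside the sum and lets me reindex by $\ell = k - j$. After doing so, the left-hand side of \eqref{2.9} becomes
\[
(-1)^{j}\binom{n}{j}\sum_{\ell=0}^{n-j}(-1)^{\ell}\binom{n-j}{\ell}(\ell+j)^{\nu}.
\]
So I have turned the sum into the $(n-j)$-th forward difference, evaluated at $0$, of the polynomial $P(x):=(x+j)^{\nu}$ (up to the overall sign $(-1)^{n-j}$).

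The key step is then the classical fact that for any polynomial $P$ of degree $d$ and any integer $N>d$,
\[
\sum_{\ell=0}^{N}(-1)^{\ell}\binom{N}{\ell}P(\ell)=(-1)^{N}\Delta^{N}P(0)=0,
\]
which follows, for instance, by linearity after checking it on the basis $\{x(x-1)\cdots(x-r+1)\}_{r\leq d}$ using the Vandermonde/absorption identity, or by induction on $N$ via $\Delta^{N}=\Delta(\Delta^{N-1})$ and the fact that $\Delta$ lowers the degree by one. In our situation $P(x)=(x+j)^{\nu}$ has degree $\nu$, and the range $j=0,1,\ldots,n-\nu-1$ ensures $N=n-j\geq \nu+1>\nu=\deg P$, so the hypothesis of the cited fact is satisfied and the inner sum vanishes. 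This gives \eqref{2.9} immediately.

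There is no real obstacle; the only thing to be careful about is the index bookkeeping to verify that the hypothesis $N>\deg P$ really translates to the stated range $j\leq n-\nu-1$, and to state explicitly which version of the finite-difference identity is being invoked (some references phrase it as $\Delta^{N}P(0)=\sum_{\ell}(-1)^{N-\ell}\binom{N}{\ell}P(\ell)$, so one must track the sign $(-1)^{N}$ carefully, though this sign is irrelevant for the vanishing conclusion). Once these points are in order, the proof is essentially one line beyond the binomial swap.
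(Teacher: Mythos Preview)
Your proof is correct and follows essentially the same approach as the paper: both use the binomial swap $\binom{n}{k}\binom{k}{j}=\binom{n}{j}\binom{n-j}{k-j}$, reindex by $\ell=k-j$, and then invoke the vanishing of high-order finite differences of a degree-$\nu$ polynomial. The only cosmetic difference is that the paper first expands $(\ell+j)^{\nu}$ via the binomial theorem and applies the identity $\sum_{\ell}(-1)^{\ell}\binom{n-j}{\ell}\ell^{i}=0$ for $i<n-j$ term by term, whereas you apply the finite-difference fact directly to the polynomial $P(x)=(x+j)^{\nu}$; your version is marginally cleaner but not a different route.
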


\begin{proof}
It is easy to verify that
$\binom{n}{k}\binom{k}{j}=\binom{n}{j}\binom{n-j}{k-j}$, so that
\begin{align*}
\sum_{k=0}^n(-1)^k\binom{n}{k}\binom{k}{j}i^k
&= \binom{n}{j}\sum_{k=j}^n(-1)^k\binom{n-j}{k-j}k^{\nu}\\
&= \binom{n}{j}\sum_{k=0}^{n-j}(-1)^{j+k}\binom{n-j}{k}(k+j)^{\nu}\\
&= (-1)^j\binom{n}{j}\sum_{k=0}^{n-j}(-1)^k\binom{n-j}{k}
\sum_{i=0}^{\nu}\binom{\nu}{i}j^{\nu-i}k^\nu\\
&= (-1)^j\binom{n}{j}\sum_{i=0}^{\nu}\binom{\nu}{i}j^{\nu-i}
\left(\sum_{k=0}^{n-j}(-1)^k\binom{n-j}{k}k^i\right).
\end{align*}
The inner sum in this last expression is well known, and evaluates as 0
whenever $i<n-j$; see, e.g., \cite[Eq.~(1.13)]{Go}. Since $j\leq-\nu-1$ and
$i\leq \nu$, this condition is satisfied, which completes the proof of
\eqref{2.9}.
\end{proof}

\begin{proof}[Proof of Proposition~\ref{prop:2.6}]
With the definition \eqref{1.3}, the left-hand side of \eqref{2.8} can be
written as
\[
\sum_{k=0}^{n}\sum_{j=0}^{k}\binom{n}{k}\binom{k}{j}k^{\nu}(-1)^{k}z^{h_{j}}
=\sum_{j=0}^{n}\left(\sum_{k=0}^{n}\binom{n}{k}\binom{k}{j}k^{\nu}(-1)^{k}\right)z^{h_{j}},
\]
where we have used the fact that $\binom{k}{j}=0$ for $j>k$. The result now
follows from Lemma~\ref{lem:2.7}.
\end{proof}

We conclude this section with the following observation. We already remarked
that by \eqref{1.3} and \eqref{2.6}, the sequences $z^{h_{n}}$ and
$H_{n}^{\bf h}(z)$ are binomial transformations of each other. This also means
that we have
$$
\op{Span}(\{z^{h_{n}}:\;n\in\N\})=\op{Span}(\{H_{n}^{\bf h}(z):\;n\in\N\}),
$$
where as usual the span of a subset $\Phi$ of a vector space (here, the space
$\C[x]$) is the smallest linear subspace that contains $\Phi$.

\section{Monotonicity and log-concavity}

We recall that a sequence $(a_n)_{n\geq 0}$ of real numbers is called
{\it absolutely monotonic\/} if for all integers $r,n\geq 0$ we have
\begin{equation}\label{3.1}
\Delta^r a_n \geq 0,
\end{equation}
where $\Delta^r$ is the difference operator of order $r$, defined recursively
by $\Delta a_n=a_{n+1}-a_n$, $\Delta^0 a_n=a_n$, and
$\Delta^{r+1}=\Delta\circ\Delta^r$ for $r\geq 0$. It is well-known that
\begin{equation}\label{3.2}
\Delta^r a_n = \sum_{k=0}^r(-1)^r\binom{r}{k}a_{n+r-k},
\end{equation}
which is easy to see by induction. This also means that if $a_n=f(n)$, where
$f$ is a polynomial of degree $d$, then for $r>d$ we have $\Delta^r a_n=0$
for all $n\geq 0$.

As part of their study of the sequence of polynomials $f_n(z)$ defined by
\eqref{1.1}, Gawronski and Neuschel \cite[Theorem~3.2]{GN} showed that for all
real $z$ with $0<z<1$ the sequence $(f_n(z))_{n\geq 0}$ is absolutely monotonic.
The following result is a generalization of this.

\begin{proposition}\label{prop:3.1}
Let ${\bf h}=(h_n)_{n\geq 0}$ be an eventually increasing sequence of
nonnegative integers. Then for all integers $r,n\geq 0$ we have
\begin{equation}\label{3.3}
\sum_{k=0}^{r}(-1)^{k}\binom{r}{k}H^{\bf h}_{n+r-k}(z)
=\sum_{k=0}^{n}\binom{n}{k}z^{h_{k+r}}.
\end{equation}
In particular, the sequence $(H^{\bf h}_{n}(z))_{n\geq 0}$ is absolutely
monotonic when $z>0$.
\end{proposition}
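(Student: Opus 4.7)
The plan is to recognize the left-hand side of \eqref{3.3} as the $r$-th forward difference $\Delta^r H^{\bf h}_n(z)$ of the sequence $(H^{\bf h}_n(z))_{n\geq 0}$, by applying \eqref{3.2}. Once the identity \eqref{3.3} is established, absolute monotonicity for $z>0$ is immediate: every term on the right-hand side of \eqref{3.3} is a product of a nonnegative binomial coefficient and a positive power $z^{h_{k+r}}$, hence $\Delta^r H^{\bf h}_n(z)\geq 0$ for all $r,n\geq 0$.

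To prove the identity I would argue by induction on $r$. The base case $r=0$ is just \eqref{1.3}. For the inductive step the crucial computation is a single application of Pascal's identity $\binom{n+1}{j}=\binom{n}{j}+\binom{n}{j-1}$, which (with the harmless boundary conventions $\binom{n}{-1}=\binom{n}{n+1}=0$) gives
\[
\Delta H^{\bf h}_n(z)=H^{\bf h}_{n+1}(z)-H^{\bf h}_n(z)
=\sum_{j=0}^{n}\binom{n}{j}z^{h_{j+1}}.
\]
This is exactly the $r=1$ case of the claim.

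To iterate, I would introduce the auxiliary family
\[
G^{[r]}_n(z):=\sum_{k=0}^{n}\binom{n}{k}z^{h_{k+r}},
\]
which has the same shape as $H^{\bf h'}_n(z)$ for the shifted sequence ${\bf h'}=(h_{j+r})_{j\geq 0}$. Applying the same Pascal computation to $G^{[r]}_n(z)$ in place of $H^{\bf h}_n(z)$ yields $\Delta_n G^{[r]}_n(z)=G^{[r+1]}_n(z)$. Combining this with $\Delta H^{\bf h}_n=G^{[1]}_n$ and iterating gives $\Delta^r H^{\bf h}_n(z)=G^{[r]}_n(z)$ for every $r\geq 0$, which is precisely \eqref{3.3}.

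There is really no serious obstacle here; the work is essentially bookkeeping with the index shifts produced by Pascal's identity. An alternative non-inductive route, if preferred, is to substitute \eqref{1.3} into the left-hand side of \eqref{3.3}, interchange the order of summation, and collapse the inner sum via the standard identity $\sum_{k=0}^{r}(-1)^k\binom{r}{k}\binom{N-k}{j}=\binom{N-r}{j-r}$ with $N=n+r$, obtaining $\binom{n}{j-r}$ and hence the right-hand side after reindexing $i=j-r$.
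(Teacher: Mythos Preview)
Your proposal is correct. Your primary argument---induction on $r$ via Pascal's identity applied to the auxiliary family $G^{[r]}_n(z)$---is a genuinely different route from the paper's. The paper proceeds exactly as in your final ``alternative non-inductive route'': it substitutes \eqref{1.3} into the left-hand side, swaps the order of summation, and identifies the inner sum $\sum_{k=0}^{r}(-1)^{k}\binom{r}{k}\binom{n+r-k}{j}$ as $\Delta^r\binom{n}{j}$, which vanishes for $j<r$ (since $\binom{n}{j}$ is a polynomial of degree $j$ in $n$) and equals $\binom{n}{j-r}$ for $j\geq r$. Your inductive approach has the advantage of being entirely self-contained (only Pascal is used), while the paper's direct computation makes the structural reason---that one is taking an $r$-th difference of a degree-$j$ polynomial in $n$---more visible and dispatches all $r$ at once. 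Since you already sketched the paper's method as your alternative, you have both arguments in hand.
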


\begin{proof}
Using the definition \eqref{1.3}, we rewrite the left-hand side of \eqref{3.3}
as
\begin{equation}\label{3.4}
\sum_{k=0}^{r}(-1)^{k}\binom{r}{k}\sum_{j=0}^{n+r-k}\binom{n+r-k}{j}z^{h_{j}}
=\sum_{j=0}^{n+r}\left(\sum_{k=0}^{r}(-1)^{k}\binom{r}{k}\binom{n+r-k}{j}\right)z^{h_{j}},
\end{equation}
where we have extended the range of $j$ by adding zero-terms. Now we observe
that, by \eqref{3.2}, the inner sum on the right of \eqref{3.4} is just
$\Delta^r\binom{n}{j}$, and $\binom{n}{j}$ is a polynomial in $n$ of degree $j$.
Hence, by the remark following \eqref{3.2}, this sum is 0 for $j<r$. When
$j\geq r$, this inner sum has the known evaluation $\binom{n}{j-r}$; see, e.g.,
\cite[Eq.~(3.49)]{Go}. So, altogether the left-hand side of \eqref{3.3}, with
\eqref{3.4}, becomes
\[
\sum_{j=r}^{n+r}\binom{n}{j-r}z^{h_{j}} = \sum_{k=0}^{n}\binom{n}{k}z^{h_{r+k}},
\]
which was to be shown.
\end{proof}

As an immediate consequence we get the following.

\begin{corollary}\label{cor:3.2}
For any integer $m\geq 1$ and real $z>0$, the sequence $(f_{m,n}(z))_{n\geq 0}$
is absolutely monotonic.
\end{corollary}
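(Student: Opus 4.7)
The plan is to recognize that Corollary~\ref{cor:3.2} is an immediate specialization of Proposition~\ref{prop:3.1}. Taking $h_j = \binom{j}{m}$ gives a sequence of nonnegative integers which is eventually increasing (indeed, it is strictly increasing for $j \geq m-1$), so the hypotheses of Proposition~\ref{prop:3.1} are satisfied and $H^{\bf h}_n(z) = f_{m,n}(z)$ by definition.

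The argument then proceeds as follows. By identity \eqref{3.3} with this choice of ${\bf h}$,
\[
\sum_{k=0}^{r}(-1)^{k}\binom{r}{k}f_{m,n+r-k}(z)
= \sum_{k=0}^{n}\binom{n}{k}z^{\binom{k+r}{m}}.
\]
By the explicit formula \eqref{3.2} for the difference operator, the left-hand side is precisely $\Delta^r f_{m,n}(z)$. For any real $z > 0$, every term on the right is nonnegative (in fact positive), so $\Delta^r f_{m,n}(z) \geq 0$ for all integers $r, n \geq 0$. This is exactly the condition \eqref{3.1} defining absolute monotonicity, which proves the claim.

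There is essentially no obstacle here: the work has been done in Proposition~\ref{prop:3.1}, and the corollary only requires checking that $h_j = \binom{j}{m}$ is an admissible sequence and observing that positivity of $z$ makes the right-hand side of \eqref{3.3} manifestly nonnegative.
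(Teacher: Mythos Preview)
Your proposal is correct and matches the paper's approach exactly: the paper gives no separate proof of Corollary~\ref{cor:3.2}, presenting it simply as ``an immediate consequence'' of Proposition~\ref{prop:3.1}, and your write-up just spells out the verification that $h_j=\binom{j}{m}$ is an admissible sequence and that the right-hand side of \eqref{3.3} is nonnegative for $z>0$.
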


When $m=1$, this reduces to the fact that $(x^n)_{n\geq 0}$
is an absolutely monotonic sequence for any real $x>1$.

While absolute monotonicity can be considered an additive property of a
sequence, we will see that, at least in a certain interval, the sequence
$\big(f_{m,n}(z)\big)_n$ also satisfies the {\it multiplicative}
property of log-concavity. A sequence $(a_n)$ is said to be {\it log-concave}
if $a_n^2\geq a_{n-1}a_{n+1}$ for all $n$. In this connection we have the
following result, which we will prove later.

\begin{proposition}\label{prop:3.3}
Let $m\geq 2$ be an integer, and set
\begin{equation}\label{3.5}
F_{m,n}(z) := \frac{f_{m,n}(z)^2-f_{m,n-1}(z)f_{m,n+1}(z)}{1-z}.
\end{equation}
Then for $n\geq m-1$ we have
\begin{equation}\label{3.6}
F_{m,n}(z)>0 \qquad\hbox{for real}\quad z\geq 0.
\end{equation}
In particular, the sequence $\big(f_{m,n}(z)\big)_{n\geq m-1}$ is log-concave
for $0\leq z\leq 1$.
\end{proposition}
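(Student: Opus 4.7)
The plan is to express the numerator $N_{m,n}(z):=f_{m,n}(z)^2-f_{m,n-1}(z)f_{m,n+1}(z)$ as an explicit sum of nonnegative polynomials in $z\ge 0$, each manifestly divisible by $(1-z)$. Pascal's rule gives the simple recurrence $f_{m,n+1}(z)-f_{m,n}(z)=g_{m,n}(z)$, where $g_{m,n}(z):=\sum_{j=0}^{n}\binom{n}{j}z^{\binom{j+1}{m}}$. Applying this to $f_{m,n+1}$ and also to $f_{m,n-1}=f_{m,n}-g_{m,n-1}$ collapses the numerator to the Wronskian-type identity
\[
N_{m,n}(z)=f_{m,n}(z)g_{m,n-1}(z)-f_{m,n-1}(z)g_{m,n}(z).
\]

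Next, I would expand each product on the right as a double sum and reindex so that both sums share the common exponent shape $\binom{j}{m}+\binom{k+1}{m}$, arriving at
\[
N_{m,n}(z)=\sum_{j=0}^{n}\sum_{k=0}^{n-1}\binom{n}{j}\binom{n-1}{k}\Bigl[z^{\binom{j}{m}+\binom{k+1}{m}}-z^{\binom{k}{m}+\binom{j+1}{m}}\Bigr].
\]
Now pair each index pair $(j,k)$ with $(k,j)$ whenever both lie in $\{0,\dots,n-1\}$; the indices $j=n$ remain unpaired. For a paired contribution the coefficient simplifies as
\[
\binom{n}{j}\binom{n-1}{k}-\binom{n}{k}\binom{n-1}{j}=\binom{n}{k}\binom{n-1}{j}\,\frac{j-k}{n-j},
\]
which is $\ge 0$ for $j\ge k$, and the exponent difference in each bracket is $\binom{j}{m-1}-\binom{k}{m-1}\ge 0$ for $j\ge k$ (by $\binom{j+1}{m}-\binom{j}{m}=\binom{j}{m-1}$); the same inequality controls the unpaired $j=n$ block. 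Hence every surviving term has the form $c(z^a-z^b)$ with $c\ge 0$ and $a\le b$, and the factorization $z^a-z^b=(1-z)z^a(1+z+\cdots+z^{b-a-1})$ lets me extract $(1-z)$ term by term, exhibiting $F_{m,n}(z)$ as a sum of polynomials that are nonnegative on $z\ge 0$.

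It remains to upgrade nonnegativity to strict positivity, and this is where the hypothesis $n\ge m-1$ is used sharply. For any fixed $z>0$ one only needs a single term with $a<b$; the unpaired summand $(j,k)=(n,0)$ has exponent gap $\binom{n+1}{m}-\binom{n}{m}=\binom{n}{m-1}$, which is positive precisely when $n\ge m-1$. For $z=0$ only summands with $a=0$ survive, meaning $j<m$ and $k\le m-2$; a short case split then handles $n=m-1$ (the unpaired terms $(n,k)$, $k\le m-2$, contribute the positive constant $2^{m-2}$) and $n\ge m$ (the paired contributions from $j=m-1$, $k\le m-2$ are strictly positive). Log-concavity on $[0,1]$ is then an immediate consequence of $1-z\ge 0$. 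The main obstacle is the careful bookkeeping of the antisymmetric pairing, since the range mismatch $j\in\{0,\dots,n\}$ versus $k\in\{0,\dots,n-1\}$ produces genuinely unpaired summands whose exponent geometry is exactly what pins down the sharp threshold $n\ge m-1$.
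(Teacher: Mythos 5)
Your proof is correct, and it takes a genuinely different route from the paper. The paper first proves (as a separate Proposition) that $F_{m,n}(z)$ has no negative coefficients by organizing the numerator as a Cauchy product over the diagonals $\nu=j+k$, symmetrizing to coefficients $a_{\nu,j}$ given by a rational formula, and arguing that along each diagonal the coefficients change sign exactly once while the exponents decrease, so that each $-z^{\alpha}$ can be matched with some $z^{\beta}$, $\alpha>\beta$; strict positivity then requires a separate and fairly heavy lemma ($S_m(n)\geq 2^{m-2}$ for $n\geq m-1$), proved via a recurrence found by Zeilberger's algorithm. Your route instead uses the Pascal difference $f_{m,n+1}-f_{m,n}=g_{m,n}$ to collapse the numerator to $f_{m,n}g_{m,n-1}-f_{m,n-1}g_{m,n}$ and then antisymmetrizes the full double sum, so that \emph{every} surviving term is individually of the form $c(z^{a}-z^{b})$ with $c\geq 0$ and $a\leq b$ (the coefficient identity $\binom{n}{j}\binom{n-1}{k}-\binom{n}{k}\binom{n-1}{j}=\binom{n}{k}\binom{n-1}{j}\frac{j-k}{n-j}$ and the exponent-gap identity $\binom{j+1}{m}-\binom{j}{m}=\binom{j}{m-1}$ both check out). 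This is cleaner and more self-contained: it reproves the paper's Proposition~3.4 en route, and it replaces Lemma~3.5 entirely, since the constant term drops out directly (the unpaired $j=n$ terms give exactly $2^{m-2}$ when $n=m-1$, and the paired terms with $j=m-1$ give a positive constant term when $n\geq m$); your identification of the unpaired block $(n,0)$ as the source of the sharp threshold $n\geq m-1$ is exactly right. What the paper's organization buys in exchange is the standalone lower bound $S_m(n)\geq 2^{m-2}$ of Lemma~3.5 and a diagonal-by-diagonal structure that the authors hope extends to the generalization $F^{(k)}_{m,n}$ of Remark~3.6(b), where your telescoping step would not apply as directly.
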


Since $f_{m,n}(1)=2^n$, the numerator on the right of \eqref{3.5} vanishes,
and therefore $F_{m,n}(z)$ is a polynomial, which also means that $z=1$ is
a removable singularity of this fraction.

Also, since $f_{m,n}(z)=2^n$ when $n\leq m-1$, it is clear that $F_{m,n}(z)=0$
for $n\leq m-2$. Furthermore, by \eqref{1.2a} and for $m\geq 2$, we have
$F_{m,m-1}(z) = 2^{m-2}$ and
\[
F_{m,m}(z) = 2^{m-1}\big(z^m+z^{m-1}+\cdots+z^2\big)
+\big(2^{m-1}-1\big)z+(m-2)2^{m-1}+1.
\]
We notice that there are no negative coefficients.
Surprisingly, this is true in general, as we shall prove in the next result.

\begin{proposition}\label{prop:3.4}
For all integers $m\geq 2$ and $n\geq 1$, the polynomial $F_{m,n}(z)$ has
no negative coefficients.
\end{proposition}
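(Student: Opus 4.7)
My plan is to express the numerator $N(z) := (1-z)F_{m,n}(z) = f_{m,n}(z)^2 - f_{m,n-1}(z)f_{m,n+1}(z)$ as a non-negative linear combination of polynomials of the form $z^A - z^B$ with $A\le B$. Once that is done, dividing by $1-z$ converts each such summand into $z^A + z^{A+1} + \cdots + z^{B-1}$ (empty, hence $0$, when $A=B$), which manifestly has non-negative coefficients.

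First I would expand $N(z)$ via \eqref{1.2} and apply Pascal's rule in the two forms $\binom{n}{i}=\binom{n-1}{i}+\binom{n-1}{i-1}$ and $\binom{n+1}{j}=\binom{n}{j}+\binom{n}{j-1}$ to obtain the identity
\[
\binom{n}{i}\binom{n}{j} - \binom{n-1}{i}\binom{n+1}{j} = \binom{n-1}{i-1}\binom{n}{j} - \binom{n-1}{i}\binom{n}{j-1}.
\]
Splitting the resulting expression for $N(z)$ into two sums and re-indexing them by $r=i-1,\,s=j$ and $r=i,\,s=j-1$ respectively then yields
\[
N(z) = \sum_{r,s\ge 0}\binom{n-1}{r}\binom{n}{s}\Bigl(z^{\binom{r+1}{m}+\binom{s}{m}} - z^{\binom{r}{m}+\binom{s+1}{m}}\Bigr).
\]

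The key step is to symmetrize by pairing the $(r,s)$- and $(s,r)$-terms. The diagonal $r=s$ contributes nothing, and the pairing produces
\[
N(z) = \sum_{0\le r<s}\Bigl[\binom{n-1}{r}\binom{n}{s}-\binom{n-1}{s}\binom{n}{r}\Bigr]\!\Bigl(z^{\binom{r+1}{m}+\binom{s}{m}} - z^{\binom{r}{m}+\binom{s+1}{m}}\Bigr).
\]
Two sign checks remain. For $0\le r<s$, the scalar bracket equals $\tfrac{s-r}{n}\binom{n}{r}\binom{n}{s}\ge 0$ (use $\binom{n-1}{k}=\tfrac{n-k}{n}\binom{n}{k}$ for $0\le k\le n-1$; the boundary case $s=n$ reduces directly to $\binom{n-1}{r}\ge 0$, and if $r\ge n$ then $s>n$ and every term is $0$). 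For the exponent bracket, $\binom{k+1}{m}-\binom{k}{m}=\binom{k}{m-1}$ gives
\[
\binom{r}{m}+\binom{s+1}{m}-\binom{r+1}{m}-\binom{s}{m} = \binom{s}{m-1}-\binom{r}{m-1} \ge 0,
\]
since $\binom{k}{m-1}$ is a non-decreasing function of $k$. Hence every summand is a non-negative multiple of some $z^A-z^B$ with $A\le B$, and the conclusion follows.

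The main obstacle is spotting the Pascal-style rewriting of the coefficient in the very first step; after that, the re-indexing, the symmetrization, and the two sign checks are routine.
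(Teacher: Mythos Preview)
Your proof is correct, and it follows a genuinely different route from the paper's own argument. The paper groups the double sum in the numerator by $\nu=i+j$ (Cauchy-product style), writes $(1-z)F_{m,n}(z)=\sum_\nu g_\nu(z)$, and then for each $g_\nu$ derives the explicit closed form
\[
a_{\nu,j}=\binom{n}{j}\binom{n}{\nu-j}\cdot
\frac{2(2n+1)j(\nu-j)-(n+1)\nu(\nu-1)}{n(n+1-\nu+j)(n+1-j)}
\]
for its coefficients, analyses the sign change of $a_{\nu,j}$ as $j$ runs from $0$ to $\lfloor\nu/2\rfloor$, and then uses $g_\nu(1)=0$ together with the monotonicity of the exponents to pair each negative $-z^\alpha$ with a positive $z^\beta$ having $\beta<\alpha$. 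Your argument bypasses this entire computation: the Pascal rewriting
\[
\binom{n}{i}\binom{n}{j}-\binom{n-1}{i}\binom{n+1}{j}
=\binom{n-1}{i-1}\binom{n}{j}-\binom{n-1}{i}\binom{n}{j-1}
\]
followed by the $(r,s)\leftrightarrow(s,r)$ symmetrisation immediately produces a sum over $r<s$ of terms of the form $\frac{s-r}{n}\binom{n}{r}\binom{n}{s}\bigl(z^A-z^B\bigr)$ with $A\le B$, so both the scalar sign and the exponent inequality are one-liners. What the paper's approach buys is the explicit intermediate formula for $a_{\nu,j}$, which might be useful for finer questions (e.g., identifying exactly which coefficients of $F_{m,n}$ are strictly positive); what your approach buys is a much shorter and more transparent proof that also makes clear the only property of the exponent sequence actually needed, namely that the increments $\binom{k+1}{m}-\binom{k}{m}=\binom{k}{m-1}$ are non-decreasing in $k$.
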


\begin{proof}
Since $F_{m,n}(z)=0$ for $n\leq m-2$ and $F_{m,m-1}(z)$ is a constant, we may
assume that $n\geq m$. With \eqref{1.2}, and using the same idea as that of a
Cauchy product, we get
\begin{align*}
f_{m,n}(z)^2 &= \sum_{\nu=0}^{2n}\left(\sum_{j=0}^{\nu}
\binom{n}{j}\binom{n}{\nu-j}z^{\binom{j}{m}+\binom{\nu-j}{m}}\right),\\
f_{m,n-1}(z)f_{m,n+1}(z) &= \sum_{\nu=0}^{2n}\left(\sum_{j=0}^{\nu}
\binom{n-1}{j}\binom{n+1}{\nu-j}z^{\binom{j}{m}+\binom{\nu-j}{m}}\right),
\end{align*}
so that with \eqref{3.5} we have
\begin{equation}\label{3.9a}
(1-z)F_{m,n}(z) = \sum_{\nu=0}^{2n}g_{\nu}(z),
\end{equation}
where
\begin{equation}\label{3.10a}
g_{\nu}(z):=\sum_{j=0}^{\nu}\left(\binom{n}{j}\binom{n}{\nu-j}
-\binom{n-1}{j}\binom{n+1}{\nu-j}\right)z^{\binom{j}{m}+\binom{\nu-j}{m}}.
\end{equation}
Using the Chu-Vandermonde convolution twice (see, e.g., \cite[p.~169]{GKP} or
\cite[Eq.~(3.1)]{Go}), we see that $g_{\nu}(1)=0$ for all $\nu\geq 0$. This also
implies that
\[
g_{\nu}(z) = 0\qquad\hbox{for}\quad 0\leq\nu\leq m-1
\]
since $\binom{j}{m}=\binom{\nu-j}{m}=0$ for all $j$ in this case.

To get a more symmetric object, we reverse the summation in \eqref{3.10a}
and add the result to \eqref{3.10a}, obtaining
\begin{equation}\label{3.11a}
2g_{\nu}(z):=\sum_{j=0}^{\nu}a_{\nu,j}z^{\binom{j}{m}+\binom{\nu-j}{m}},
\end{equation}
where
\begin{equation}\label{3.12a}
a_{\nu,j}:=2\binom{n}{j}\binom{n}{\nu-j}-\binom{n-1}{j}\binom{n+1}{\nu-j}
-\binom{n+1}{j}\binom{n-1}{\nu-j}.
\end{equation}
By easy manipulations of the relevant binomial coefficients, we get
\begin{align*}
\binom{n}{j}\binom{n}{\nu-j}-\binom{n-1}{j}\binom{n+1}{\nu-j}
&= \binom{n}{j}\binom{n}{\nu-j}\cdot\frac{(2n+1)j-n\nu}{n(n+1-\nu+j)},\\
\binom{n}{j}\binom{n}{\nu-j}-\binom{n+1}{j}\binom{n-1}{\nu-j}
&= \binom{n}{j}\binom{n}{\nu-j}\cdot\frac{-(2n+1)j+(n+1)\nu}{n(n+1-j)}.
\end{align*}
Adding these two identities, we get after some further straightforward
manipulations,
\begin{equation}\label{3.13a}
a_{\nu,j}=\binom{n}{j}\binom{n}{\nu-j}\cdot
\frac{2(2n+1)j(\nu-j)-(n+1)\nu(\nu-1)}{n(n+1-\nu+j)(n+1-j)}.
\end{equation}
We obviously have $a_{\nu,j}=a_{\nu,\nu-j}$, and the numerator on the right,
which we call $N(j)$, is an increasing function of $j$, for
$0\leq j\leq\lfloor\nu/2\rfloor$. We clearly have $N(0)<0$, and it is easy to
verify that $N(\lfloor\nu/2\rfloor)>0$. Therefore, by \eqref{3.13a}, the integer
coefficients $a_{\nu,j}$ are negative for $0\leq j\leq j_0$, and then nonegative
for $j_0+1\leq j\leq \lfloor\nu/2\rfloor$, where $j_0\geq 1$, and at most one
zero value can occur.

We also note that the exponents $\binom{j}{m}+\binom{\nu-j}{m}$ of $z$ decrease
as $j$ increases from 0 to $\lfloor\nu/2\rfloor$. Since $g_{\nu}(1)=0$, the
coefficients add up to 0, and this means that each $-z^{\alpha}$ can be paired
with some $z^{\beta}$, where $\alpha>\beta$. Since we have
\[
-z^{\alpha}+z^{\beta}
= (1-z)z^{\beta}\big(z^{\alpha-\beta-1}+z^{\alpha-\beta-2}+\cdots+1\big),
\]
the polynomials $g_{\nu}(z)/(1-z)$, $\nu=0,1,\ldots,2n$, have no negative
coefficients, and so by \eqref{3.9a}, $F_{m,n}(z)$ has no negative coefficients
either. This completes the proof of Proposition~\ref{prop:3.4}.
\end{proof}

While much of Proposition~\ref{prop:3.3} is obvious from
Proposition~\ref{prop:3.4}, for the complete proof we need the following lemma,
which is also of independent interest. For positive integers $m$ and $n$ we
denote
\begin{equation}\label{3.14a}
S_{m}(n):=\left(\sum_{j=0}^{m-1}\binom{n}{j}\right)^2
-\left(\sum_{j=0}^{m-1}\binom{n-1}{j}\right)\cdot
\left(\sum_{j=0}^{m-1}\binom{n+1}{j}\right).
\end{equation}
By \eqref{3.5} and \eqref{1.2b} it is clear that $S_m(n)=F_{m,n}(0)$.

\begin{lemma}\label{lem:3.5}
For any integers $m\geq 2$ and $n\geq m-1$ we have $S_m(n)\geq 2^{m-2}$, with
equality when $n=m-1$.
\end{lemma}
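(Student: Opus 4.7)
The plan is to derive a compact closed form for $S_m(n)$ from which both halves of the lemma fall out at once. Set $A(k):=\sum_{j=0}^{m-1}\binom{k}{j}$, so that, by \eqref{3.14a}, $S_m(n)=A(n)^2-A(n-1)A(n+1)$. Pascal's rule gives the two-term recurrence $A(n+1)=2A(n)-\binom{n}{m-1}$, equivalently $2A(n-1)=A(n)+\binom{n-1}{m-1}$. Substituting both expressions into $S_m(n)$ causes the $A(n)^2$ term to cancel, and the result collapses to the much cleaner identity
\[
S_m(n)=\binom{n}{m-1}A(n-1)-\binom{n-1}{m-1}A(n).
\]

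Next I would re-expand using the definition of $A$:
\[
S_m(n)=\sum_{j=0}^{m-1}\left[\binom{n}{m-1}\binom{n-1}{j}-\binom{n-1}{m-1}\binom{n}{j}\right],
\]
and simplify each bracket. Applying the elementary identity $\binom{n}{k}=\frac{n}{n-k}\binom{n-1}{k}$ to both $\binom{n}{m-1}$ and $\binom{n}{j}$ and clearing a common factor of $\binom{n-1}{m-1}\binom{n-1}{j}$, each summand collapses to $\binom{n}{m-1}\binom{n}{j}\cdot\frac{m-1-j}{n}$. The $j=m-1$ contribution vanishes, leaving the factored form
\[
S_m(n)=\frac{\binom{n-1}{m-2}}{m-1}\sum_{j=0}^{m-2}(m-1-j)\binom{n}{j}.
\]

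With this formula in hand both claims are essentially immediate. Setting $n=m-1$ gives $\binom{n-1}{m-2}/(m-1)=1/(m-1)$, and using the standard identity $\sum_{j=0}^{m-1}j\binom{m-1}{j}=(m-1)2^{m-2}$ one checks $\sum_{j=0}^{m-2}(m-1-j)\binom{m-1}{j}=(m-1)2^{m-2}$, so $S_m(m-1)=2^{m-2}$, which takes care of the equality case. For general $n\geq m-1$ the factor $\binom{n-1}{m-2}/(m-1)$ is positive and nondecreasing in $n$, while each summand $(m-1-j)\binom{n}{j}$ with $0\leq j\leq m-2$ is nonnegative and nondecreasing in $n$. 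Therefore $S_m(n)$ is nondecreasing on $n\geq m-1$, and the inequality $S_m(n)\geq 2^{m-2}$ follows from the boundary value.

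The main obstacle is the algebraic derivation of the factored formula in the first two steps; once it is in place the remainder of the proof is transparent. A direct induction on $n$ showing $S_m(n+1)-S_m(n)\geq 0$ is also possible but computationally more awkward, as it requires iterating the recurrence for $A$ several times without the benefit of cancellation.
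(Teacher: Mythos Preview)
Your argument is correct. The key identity
\[
S_m(n)=\binom{n}{m-1}A(n-1)-\binom{n-1}{m-1}A(n)
=\frac{\binom{n-1}{m-2}}{m-1}\sum_{j=0}^{m-2}(m-1-j)\binom{n}{j}
\]
follows cleanly from the Pascal recurrence $A(n+1)=2A(n)-\binom{n}{m-1}$ together with the polynomial identity $n\binom{n-1}{k}=(n-k)\binom{n}{k}$, and both the equality case $n=m-1$ and the monotonicity in $n$ are then immediate, exactly as you describe.

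This is a genuinely different and more elementary route than the paper's proof. The paper observes that $S_m(n)$, viewed as a polynomial in $n$, vanishes at $n=1,\ldots,m-2$, divides out the factor $\prod_{j=1}^{m-2}(n-j)$ to define an auxiliary sequence $s_m(n)$, invokes Zeilberger's algorithm to obtain a three-term recurrence for $(s_m)_{m\geq 2}$, and then, after the shift $g_m(t)=s_m(t+m-1)$, proves by a somewhat delicate induction that all coefficients of $g_m$ are positive. Your closed form bypasses the computer-algebra step entirely and gives, as a free by-product, that $S_m(n)$ is nondecreasing in $n$ for $n\geq m-1$; in fact your formula also reproduces the paper's positivity-of-coefficients conclusion, since both $\binom{t+m-2}{m-2}$ and each $\binom{t+m-1}{j}$ for $0\leq j\leq m-2$ have nonnegative coefficients as polynomials in $t$. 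The paper's approach, on the other hand, yields an explicit recurrence in $m$ that may be of independent interest.
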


\begin{proof}
We first note that for fixed $m$ the expression $S_{m}(n)$ can be seen as a
polynomial in $n$. We then use the fact that, by \eqref{1.2b}, we have
$S_{m}(j)=0$ for $j=1, 2, \ldots, m-2$, and thus
\[
\prod_{j=1}^{m-2}(n-j)\quad\hbox{divides}\quad S_{m}(n)
\]
as polynomials in $n$. With this in mind, we define the related polynomial
sequence
\begin{equation}\label{3.15a}
s_{m}(n):=\frac{(m-1)(m-2)!^{2}}{\prod_{j=1}^{m-2}(n-j)}S_{m}(n).
\end{equation}
One can check that the sequence $(s_{m}(n))_{m\geq 2}$ satisfies the recurrence
$s_{2}(n)=1$, $s_{3}(n)=n+2$, and
\begin{equation}\label{3.16a}
s_{m}(n)=(n+2)s_{m-1}(n)-(m-3)(n-m+2)s_{m-2}(n),\quad m\geq 4;
\end{equation}
to do so, we used the Zeilberger algorithm; see \cite[pp.~101--119]{PWZ}.
We now use the shifted sequence $g_{m}(t):=s_{m}(t+m-1)$ and show that the
polynomials $g_{m}(t)$ have only positive coefficients.

First, from \eqref{3.16a} it is clear that the sequence $(g_{m}(t))_{m\geq 2}$
satisfies the recurrence relation $g_{2}(t)=1, g_{3}(t)=t+4$, and
\begin{equation}\label{3.17a}
g_{m}(t)=(t+3m-5)g_{m-1}(t)-2(m-3)(t+m-2)g_{m-2}(t),\quad m\geq 4.
\end{equation}
From this relation it is clear that $\op{deg}g_{m}(t)=m-2$ and that the
coefficients of $g_{m}(t)$ are integers. We therefore write
\[
g_{m}(t)=\sum_{i=0}^{m-2}a_{i,m}t^{i},
\]
and from the recurrence \eqref{3.17a} we get that $a_{m-2,m}=1$,
$a_{0,m}=2^{m-2}(m-1)!$, and for $0\leq i\leq m-3$ we get the recurrence
\[
a_{i,m}=a_{i-1,m-1}-2(m-3)a_{i-1,m-2}+(3m-5)a_{i,m-1}-2(m-3)(m-2)a_{i,m-2}.
\]
Using this relation, we can prove by induction on $m$ that
\[
a_{i,m}>2(m-2)a_{i,m-1},\qquad 0\leq i\leq m-3,\quad m\geq 3.
\]
We thus get that $a_{i,m}>0$ for any $m\geq 2$ and $0\leq i\leq m-2$, and in
particular we have shown that $g_m(t)\geq 2^{m-2}(m-1)!$ for $t\geq 0$, or
$s_m(n)\geq 2^{m-2}(m-1)!$ for $n\geq m-1$. Finally, with \eqref{3.15a} we get
$S_m(n)\geq 2^{m-2}$, and the definition \eqref{3.14a} immediately gives
$S_m(m-1)=2^{m-2}$; this completes the proof.
\end{proof}

The proof of Proposition~\ref{prop:3.3} is now quite obvious:
Proposition~\ref{prop:3.4} shows that $F_{m,n}(z)$ is nonnegative for
$z\geq 0$, and Lemma~\ref{3.5} shows that it is in fact positive.

\begin{rem}\label{rem:3.6}
{\rm
(a) Computations suggest that for even $m\geq 2$, Proposition~\ref{prop:3.3} can
be extended to include the inequality $F_{m,n}(z)\geq 0$ for $-1\leq z\leq 0$.
In this case, Proposition~\ref{prop:3.4} will not be of much help since
$F_{m,n}(z)$ has both even and odd powers of $z$.

(b) One could also consider a natural extension of $F_{m,n}(z)$, namely
\begin{equation}\label{3.18a}
F_{m,n}^{(k)}(z) := \frac{f_{m,n}(z)^2-f_{m,n-k}(z)f_{m,n+k}(z)}{1-z},
\end{equation}
for arbitrary integers $k\geq 1$. Once again, by the right-hand identity in
\eqref{1.2b} the numerator vanishes for $z=1$, and therefore the quotient is a
polynomial. Computations suggest that the polynomial in \eqref{3.18a} has non-negative coefficients for any $k\geq 1$. We did not make a serious attempt at proving this. An approach as in the proof of Proposition~\ref{prop:3.4} might
work; however, the analogue of \eqref{3.13a} turns out to be difficult to deal
with for general $k$.
}
\end{rem}

In analogy to \eqref{3.18a} one could also consider the expression
\begin{equation}\label{3.19a}
G_{m,n}^{(k)}(z) := \frac{f_{m,2n}(z)-f_{m,n-k}(z)f_{m,n+k}(z)}{z-1},
\end{equation}
for arbitrary integers $k\geq 1$. By the right-hand identity in \eqref{1.2b},
this is again a polynomial which, moreover, has the following property.
We leave the proof to the interested reader.

\begin{proposition}\label{prop:3.7}
For all integers $m\geq 2$ and $n, k\geq 1$, the polynomial $G_{m,n}^{(k)}(z)$
has no negative coefficients.
\end{proposition}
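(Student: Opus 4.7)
The plan is to combine the Chu--Vandermonde identity with a simple pointwise comparison of exponents, which together expose $G_{m,n}^{(k)}(z)$ as a nonnegative combination of explicitly nonnegative polynomials. First, I would apply Chu--Vandermonde in the form $\binom{2n}{\nu}=\sum_{j=0}^{\nu}\binom{n-k}{j}\binom{n+k}{\nu-j}$ to rewrite the $\nu$-th coefficient of $f_{m,2n}(z)$, and expand $f_{m,n-k}(z)f_{m,n+k}(z)$ as a Cauchy product. Subtracting and dividing by $z-1$ yields
\[
G_{m,n}^{(k)}(z)=\sum_{\nu=0}^{2n}\sum_{j=0}^{\nu}\binom{n-k}{j}\binom{n+k}{\nu-j}\cdot\frac{z^{\binom{\nu}{m}}-z^{\binom{j}{m}+\binom{\nu-j}{m}}}{z-1}.
\]

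Second, I would verify the super-additivity inequality $\binom{\nu}{m}\ge\binom{j}{m}+\binom{\nu-j}{m}$ for all $0\le j\le\nu$. This is immediate combinatorially: the $m$-subsets of $\{1,\ldots,\nu\}$ contained in $\{1,\ldots,j\}$ are disjoint from those contained in $\{j+1,\ldots,\nu\}$, and both families inject into the set of all $m$-subsets of $\{1,\ldots,\nu\}$. Equivalently, apply Chu--Vandermonde to $\binom{\nu}{m}=\sum_{i=0}^{m}\binom{j}{i}\binom{\nu-j}{m-i}$ and keep only the $i=0$ and $i=m$ terms, all remaining terms being nonnegative.

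Third, I would invoke the elementary identity that for integers $\alpha\ge\beta\ge 0$,
\[
\frac{z^{\alpha}-z^{\beta}}{z-1}=z^{\beta}\bigl(1+z+\cdots+z^{\alpha-\beta-1}\bigr),
\]
which is a polynomial with nonnegative integer coefficients (understood to vanish when $\alpha=\beta$). Together with the trivial nonnegativity of $\binom{n-k}{j}\binom{n+k}{\nu-j}$, this exhibits $G_{m,n}^{(k)}(z)$ as a nonnegative integer combination of polynomials with nonnegative coefficients, completing the argument.

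The main obstacle is conceptual rather than technical: one must recognize that, in contrast to the setting of Proposition~\ref{prop:3.4}, the numerator here does not produce any cancellations that require delicate unwinding. The exponent comparison is strictly one-sided and Chu--Vandermonde matches the coefficients of $f_{m,2n}(z)$ with those of the product perfectly, so no analysis of sign changes of binomial-type coefficients (as in the analogue of \eqref{3.13a}) is needed. This is presumably why the authors felt comfortable leaving the proof to the reader.
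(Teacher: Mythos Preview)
Your argument is correct. The paper does not supply its own proof of this proposition (it explicitly leaves it to the reader), so there is nothing to compare against directly; nevertheless, your approach is precisely the natural one suggested by the surrounding text. It parallels the strategy of Proposition~\ref{prop:3.4}---a Cauchy-product expansion combined with the Chu--Vandermonde identity---but exploits the key simplification that here the exponent comparison $\binom{\nu}{m}\ge\binom{j}{m}+\binom{\nu-j}{m}$ is uniformly one-sided and the binomial weights $\binom{n-k}{j}\binom{n+k}{\nu-j}$ are nonnegative, so no sign analysis of the type in \eqref{3.13a} is required. This is almost certainly the argument the authors had in mind. (Implicitly one needs $n\ge k$ so that $f_{m,n-k}$ is defined; this is harmless and consistent with the intended statement.)
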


We conclude this section with some observations based on computations in the
case $m=2$, i.e., the polynomials defined by \eqref{1.1}. If we iterate the
operator in \eqref{3.5}, then after each step
the resulting polynomial again seems to have no negative coefficients. We can
rephrase this in terms of log-concavity: Expanding on the definition just before
Proposition~\ref{3.3} we set
\[
{\mathcal L}(a_n) := a_n^2-a_{n-1}a_{n+1}
\]
for a given sequence $(a_n)$. Following \cite{MS}, the operator ${\mathcal L}$
can be iterated, and $(a_n)$ is said to be $k$-log-concave if
${\mathcal L}^j(a_n)\geq 0$ for all $j=0,1,\ldots,k$. If $(a_n)$ is
$k$-log-concave for all $k>0$, then it is said to be $\infty$-log-concave.

\begin{conjecture}\label{conj:3.8}
The sequence $\big(f_n(z)\big) = \big(f_{2,n}(z)\big)$ is $\infty$-log-concave
for all $0\leq z\leq 1$.
\end{conjecture}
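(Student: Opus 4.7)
The plan is to combine numerical experiments with a structural approach generalizing the proof of Proposition~\ref{prop:3.4}. First I would use a computer algebra system to compute $\mathcal{L}^{k}(f_{n}(z))$ for small $k$ and $n$, both to confirm the conjecture in a wide range and to look for a pattern in the iterated expressions. Since $f_{n}(1)=2^{n}$, each iterate vanishes at $z=1$ to some order $d_{k}\geq 1$; an easy induction based on
\begin{equation*}
\mathcal{L}^{k}(f_{n}) = \bigl(\mathcal{L}^{k-1}f_{n}\bigr)^{2} - \bigl(\mathcal{L}^{k-1}f_{n-1}\bigr)\bigl(\mathcal{L}^{k-1}f_{n+1}\bigr)
\end{equation*}
shows that $d_{k}\geq 2^{k-1}$. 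Writing
\begin{equation*}
\mathcal{L}^{k}(f_{n}(z)) = (1-z)^{d_{k}}\,G_{n}^{(k)}(z),
\end{equation*}
the conjecture reduces to $G_{n}^{(k)}(z)\geq 0$ on $[0,1]$, and a natural strengthening modelled on Proposition~\ref{prop:3.4} is that each $G_{n}^{(k)}(z)$ has only nonnegative coefficients.

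One line of attack would then try to mimic the proof of Proposition~\ref{prop:3.4} directly. Expanding $\mathcal{L}^{k}(f_{n}(z))$ as a signed multilinear combination of $2^{k}$-fold products $f_{n+i_{1}}(z)\cdots f_{n+i_{2^{k}}}(z)$ via \eqref{1.1} yields a polynomial in $z$ whose coefficients are alternating sums of products of binomial coefficients. Iterated applications of the Chu--Vandermonde convolution (as in the passage from \eqref{3.10a} to \eqref{3.13a}) should collapse those sums, and one would then seek to pair each negative contribution with a larger positive contribution multiplied by an appropriate power of $(1-z)$. For $k=2$ this program might be explicitly tractable, which would already yield $2$-log-concavity of $(f_{n}(z))$ on $[0,1]$.

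A more conceptual route would be via Br\"{a}nd\'{e}n's theory relating real-rootedness to $\infty$-log-concavity. The idea would be to show that, for each fixed $z\in[0,1]$, a suitable auxiliary polynomial built from $(f_{n}(z))_{n}$---for instance the partial sums obtained by truncating the generating function in Corollary~\ref{cor:2.2}, or a transform thereof---lies in the Laguerre--P\'olya class, so that an $\infty$-log-concavity-preservation result applies. The main obstacle on either route is the rapid blow-up in complexity under iteration of the nonlinear operator $\mathcal{L}$: the monotone ordering of exponents $\binom{j}{m}+\binom{\nu-j}{m}$ that drives the sign cancellation in Proposition~\ref{prop:3.4} has no obvious analogue once $2^{k}$ triangular numbers are combined with alternating signs, and any real-rootedness argument must be made \emph{uniform} in the parameter $z$, a much more delicate requirement than real-rootedness at a single point. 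For these reasons I expect that a clean proof will require a new identity specific to the polynomials $f_{n}(z)$ rather than a straightforward induction on $k$.
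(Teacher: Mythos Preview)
The statement you are addressing is labelled a \emph{Conjecture} in the paper, and the paper does not prove it; it is presented as an open problem supported only by numerical experiments. There is therefore no ``paper's own proof'' to compare your proposal against. What you have written is a reasonable research plan, not a proof, and you yourself acknowledge this in the final paragraph.

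A few specific comments on how your outline relates to what the paper actually says. Your lower bound $d_{k}\geq 2^{k-1}$ on the order of vanishing of $\mathcal{L}^{k}(f_{n}(z))$ at $z=1$ is correct and easy, but the paper records the stronger experimental observation that the exact order is $2^{k}-1$ for $n\geq k$, and formulates this (together with nonnegativity of the quotient's coefficients) as a separate Conjecture~3.9. So your ``natural strengthening'' is essentially the paper's Conjecture~3.9, except that you have the wrong exponent on $(1-z)$. Your two proposed routes --- a direct generalization of the sign-pairing argument in Proposition~3.4, and a Br\"and\'en-style real-rootedness approach --- are both plausible but, as you correctly diagnose, face serious obstacles: the combinatorics of $2^{k}$-fold products of $f_{n+i}$ has no visible monotone structure analogous to the one exploited in \eqref{3.11a}--\eqref{3.13a}, and uniform-in-$z$ real-rootedness is far from clear. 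The paper makes no claim that either route would succeed; it simply records the conjectures and moves on.
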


Since we have seen that $1-z\mid{\mathcal L}(f_n(z))$, it is clear from the
definition of ${\mathcal L}$ that the $2^{k-1}$th power of $1-z$ divides
${\mathcal L}^k(f_n(z))$. However, computations indicate that for $n\geq k$,
the power of $1-z$ dividing ${\mathcal L}^k(f_n(z))$ is in fact $2^k-1$. 

In light of this (conjectural) property, we now rephrase our earlier 
experimental observations as follows.


\begin{conjecture}\label{conj:3.9}
For all positive integers $k$ and $n\geq k$, the expression 
\[
\frac{{\mathcal L}^k\big(f_{2,n}(z)\big)}{(1-z)^{2^{k}-1}}
\]
is a polynomial with positive integer coefficients.
\end{conjecture}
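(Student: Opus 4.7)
The plan is to prove the conjecture by induction on $k$. Set $Q_{k,n}(z) := \mathcal{L}^k(f_{2,n}(z))/(1-z)^{2^k-1}$; the inductive hypothesis is that, for $n\geq k$, $Q_{k,n}(z)$ is a polynomial with positive integer coefficients. The base case $k=1$ is precisely Proposition~\ref{prop:3.4} together with Lemma~\ref{lem:3.5} via Proposition~\ref{prop:3.3}, as noted in the paper. For the inductive step, a direct expansion gives
\[
\mathcal{L}^{k+1}\bigl(f_{2,n}(z)\bigr) = (1-z)^{2(2^k-1)}\bigl[Q_{k,n}(z)^2 - Q_{k,n-1}(z)\,Q_{k,n+1}(z)\bigr],
\]
so that
\[
Q_{k+1,n}(z) = \frac{Q_{k,n}(z)^2 - Q_{k,n-1}(z)\,Q_{k,n+1}(z)}{1-z}.
\]
Two things must be checked: (i) that $(1-z)$ divides the numerator, i.e.\ $Q_{k,n}(1)^2 = Q_{k,n-1}(1)Q_{k,n+1}(1)$; and (ii) that the quotient has positive integer coefficients (integrality then being automatic from the inductive hypothesis and~(i)).

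For~(i), one shows that $n\mapsto Q_{k,n}(1)$ is log-linear in $n$ by a secondary induction on~$k$. A direct computation gives $Q_{0,n}(1) = 2^n$ and, using $f_{2,n}'(1) = n\,2^{n-2}$, also $Q_{1,n}(1) = 4^{n-1}$. Applying L'Hopital's rule yields
\[
Q_{k+1,n}(1) = -\frac{d}{dz}\bigl[Q_{k,n}(z)^2 - Q_{k,n-1}(z)Q_{k,n+1}(z)\bigr]\bigg|_{z=1},
\]
and to propagate log-linearity one needs to track $Q_{k,n}'(1)$ as well; the pair $(Q_{k,n}(1),\,Q_{k,n}'(1))$ should satisfy a coupled recursion from which one can extract the ansatz $Q_{k,n}(1) = C_k\rho_k^{\,n}$ with explicit constants~$C_k,\rho_k$. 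This part is tedious but essentially mechanical.

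The positivity statement~(ii) is the main obstacle. The proof of Proposition~\ref{prop:3.4} exploited the explicit Cauchy-product expansion of $f_{2,n}(z)^2$ and $f_{2,n-1}(z)f_{2,n+1}(z)$ together with the crucial observation that the integer coefficients $a_{\nu,j}$ in~\eqref{3.13a} have exactly one sign change as $j$ varies; this enabled the pairing of negative terms $-z^{\alpha}$ with later positive terms $z^\beta$ of smaller exponent, whose difference is divisible by $1-z$ with nonnegative quotient. Iterating the operator $\tilde{\mathcal{L}}(\cdot)/(1-z)$ destroys this explicit binomial-product structure, and merely knowing that $Q_{k,n}(z)$ has positive coefficients is not strong enough to carry the induction. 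To close the argument, one would need to strengthen the hypothesis to a more refined structural property --- for example, an assertion that the coefficient sequence of $Q_{k,n}(z)^2 - Q_{k,n-1}(z)Q_{k,n+1}(z)$ has exactly one sign change, permitting the same pairing argument as in Proposition~\ref{prop:3.4} to be applied at each level --- or to provide a conceptual input such as an explicit representation of $Q_{k,n}(z)$ as a positive polynomial combination of manifestly nonnegative objects, or the application of general preservation results in the spirit of McNamara--Sagan or Brändén that promote first-order log-concavity to $\infty$-log-concavity from a sufficiently strong numerical inequality. Identifying the right strengthened induction hypothesis appears to be the heart of the problem, and I expect this is where any direct attack will stand or fall.
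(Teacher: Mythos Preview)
The statement you are attempting to prove is labeled a \emph{Conjecture} in the paper, and the paper offers no proof of it; it is presented purely as an experimental observation. There is therefore no ``paper's own proof'' to compare against.

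Your proposal is not a proof either, as you candidly acknowledge in its final paragraph: step~(ii), the positivity of $Q_{k+1,n}(z)$, is left open, and you yourself write that ``identifying the right strengthened induction hypothesis appears to be the heart of the problem.'' That is an accurate assessment, but it means what you have written is a discussion of the difficulties rather than a resolution of them. A couple of smaller points: even your base case $k=1$ is not quite settled by Proposition~\ref{prop:3.4} and Lemma~\ref{lem:3.5}, since those give only \emph{nonnegative} coefficients together with positivity of the constant term, whereas the conjecture asks for \emph{positive} coefficients throughout; and in step~(i) your sketch of tracking $(Q_{k,n}(1),Q_{k,n}'(1))$ to establish log-linearity is plausible but not carried out. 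In short, the obstacles you identify are exactly why the authors left this as a conjecture.
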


\section{Derivative properties}

In this section we establish
a connection between the polynomials $f_{m,n}(z)$ and their derivatives.
As a consequence we then obtain a partial differential equation satisfied by
the corresponding generating function. We begin with a consequence of
Proposition~\ref{prop:3.1}.

\begin{corollary}\label{cor:3.6}
Let $m\geq 1$ be an integer. Then for all $n\geq m$ we have
\begin{equation}\label{3.8}
zf'_{m,n}(z)=\binom{n}{m}\sum_{i=0}^{m}\binom{m}{i}(-1)^{i}f_{m,n-i}(z).
\end{equation}
\end{corollary}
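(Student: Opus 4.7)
The plan is to differentiate $f_{m,n}(z)$ term by term, apply a standard trinomial revision to the binomial coefficient product, and then recognize the resulting sum as an instance of Proposition~\ref{prop:3.1} with shifted parameters.

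First, from the definition \eqref{1.2} we get directly
\[
zf'_{m,n}(z)=\sum_{j=0}^{n}\binom{n}{j}\binom{j}{m}z^{\binom{j}{m}},
\]
and since $\binom{j}{m}=0$ for $j<m$, the sum effectively starts at $j=m$. Next, using the well-known identity $\binom{n}{j}\binom{j}{m}=\binom{n}{m}\binom{n-m}{j-m}$ and re-indexing with $k=j-m$, the right-hand side becomes
\[
\binom{n}{m}\sum_{k=0}^{n-m}\binom{n-m}{k}z^{\binom{k+m}{m}}.
\]

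The final step is to invoke Proposition~\ref{prop:3.1} applied to the sequence $\mathbf{h}=(\binom{j}{m})_{j\geq 0}$, with $n$ replaced by $n-m$ and $r$ replaced by $m$. The identity \eqref{3.3} then reads
\[
\sum_{k=0}^{m}(-1)^{k}\binom{m}{k}f_{m,n-k}(z)=\sum_{k=0}^{n-m}\binom{n-m}{k}z^{\binom{k+m}{m}},
\]
which is exactly the inner sum above. Substituting back yields \eqref{3.8}. The assumption $n\geq m$ is precisely what is needed for Proposition~\ref{prop:3.1} to be applied with $n-m\geq 0$.

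There is no real obstacle here; the only thing one has to watch is the trinomial revision and the correct matching of parameters when applying Proposition~\ref{prop:3.1}. Everything else is routine bookkeeping.
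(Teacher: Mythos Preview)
Your proof is correct and follows essentially the same route as the paper's: differentiate \eqref{1.2} term by term, use the trinomial revision $\binom{n}{j}\binom{j}{m}=\binom{n}{m}\binom{n-m}{j-m}$, shift the index of summation, and then identify the resulting sum with the right-hand side of \eqref{3.3} in Proposition~\ref{prop:3.1} (with $n$ replaced by $n-m$ and $r=m$). The only cosmetic difference is the order of presentation.
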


\begin{proof}
Replacing $i$ by $k$ and $m$ by $r$, we see that the sum on the right of
\eqref{3.8} is just the left-hand side of \eqref{3.3}, with $h_k=\binom{k}{m}$.
On the other hand, we get from \eqref{1.2} by differentiation,
\begin{equation}\label{3.9}
\frac{zf_{m,n}'(z)}{\binom{n}{m}}
= \sum_{j=m}^n\frac{\binom{n}{j}\binom{j}{m}}{\binom{n}{m}}z^{\binom{j}{m}}.
\end{equation}
The three binomial coefficients on the right are easily seen to combine to
$\binom{n-m}{j-m}$, and with the appropriate changes in notation and shift
in summation, we see that the expression in \eqref{3.9} is equal to the
right-hand side of \eqref{3.3}.
\end{proof}

In the special case $m=2$ we get the following identity for the polynomials
defined in \eqref{1.1}: For $n\geq 2$ we have
\[
zf_n'(z) = \binom{n}{2}\big(f_{n}(z)-2f_{n-1}(z)+f_{n-2}(z)\big);
\]
This is Proposition~3.2 in \cite{BDM3}.
Next we use \eqref{3.8} to write
$f_{m,n}(z)$ in terms of derivatives.

\begin{corollary}\label{cor:3.7}
For integers $1\leq m\leq n$ we have
\begin{equation}\label{3.10}
f_{m,n}(z)=\sum_{i=0}^{m-1}\binom{n}{i}
+z\sum_{i=m}^{n}\frac{\binom{n-i+m-1}{m-1}}{\binom{i}{m}}f_{m,i}'(z).
\end{equation}
\end{corollary}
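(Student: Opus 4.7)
The plan is to invert the $m$-fold backward-difference relation provided by Corollary~\ref{cor:3.6} and then read off the boundary constant by a single evaluation at $z=0$. First, rewrite \eqref{3.8} for $i\ge m$ in the form
\[
\sum_{j=0}^{m}(-1)^{j}\binom{m}{j} f_{m,i-j}(z) \;=\; \frac{z f'_{m,i}(z)}{\binom{i}{m}},
\]
so the left-hand side is the $m$-fold backward difference (in the index $i$) of the sequence $\bigl(f_{m,i}(z)\bigr)_{i\geq 0}$, which I extend by the convention $f_{m,k}(z):=0$ for $k<0$. For any sequence $(b_{i})_{i\ge 0}$ (extended by $b_{i}=0$ for $i<0$), denote $D_{i}(b):=\sum_{j=0}^{m}(-1)^{j}\binom{m}{j}b_{i-j}$.

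The key ingredient is the convolution inversion
\[
b_{n}\;=\;\sum_{i=0}^{n}\binom{n-i+m-1}{m-1}\,D_{i}(b),\qquad n\geq 0.
\]
This follows at once from generating functions: if $B(x)=\sum_{i\geq 0}b_{i}x^{i}$, a direct computation gives $\sum_{i\geq 0}D_{i}(b)x^{i}=(1-x)^{m}B(x)$, and multiplication by $(1-x)^{-m}=\sum_{k\geq 0}\binom{k+m-1}{m-1}x^{k}$ recovers $B(x)$; comparing $n$-th coefficients yields the claim. (Equivalently, after swapping sums it reduces to $\sum_{j=0}^{m}(-1)^{j}\binom{m}{j}\binom{\ell-j+m-1}{m-1}=\delta_{\ell,0}$ for $\ell\ge 0$, which is $[x^{\ell}](1-x)^{m}(1-x)^{-m}=[x^{\ell}]\,1$.)

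Apply this inversion with $b_{i}:=f_{m,i}(z)$ and split the sum at $i=m$. The $i\geq m$ contribution is, by the rewriting of Corollary~\ref{cor:3.6}, exactly the claimed derivative sum
\[
z\sum_{i=m}^{n}\frac{\binom{n-i+m-1}{m-1}}{\binom{i}{m}} f'_{m,i}(z).
\]
For $0\leq i\leq m-1$, every $f_{m,i-j}(z)$ entering $D_{i}(b)$ is either $2^{i-j}$ (when $i-j\geq 0$) or $0$; in either case it is independent of $z$, so the $i<m$ portion collapses to a constant $C_{m,n}$ depending only on $m$ and $n$.

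Finally, evaluate the identity at $z=0$: the $i\geq m$ contribution vanishes because of the explicit factor $z$, while the left-hand side becomes $f_{m,n}(0)=\sum_{j=0}^{m-1}\binom{n}{j}$ by \eqref{1.2b}. Hence $C_{m,n}=\sum_{j=0}^{m-1}\binom{n}{j}$, which completes the proof of \eqref{3.10}. The only genuinely nontrivial step is the convolution inversion, which is standard; the attractive feature of the argument is that the $z=0$ trick sidesteps any direct combinatorial evaluation of the boundary contribution.
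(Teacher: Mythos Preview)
Your proof is correct and takes a genuinely different route from the paper's.

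The paper verifies \eqref{3.10} directly: it expands $z f'_{m,i}(z)$ from the definition \eqref{1.2}, swaps the order of summation, and then evaluates the resulting inner sum
\[
\sum_{i=m}^{n}\frac{\binom{n-i+m-1}{m-1}}{\binom{i}{m}}\binom{i}{j}
=\frac{\binom{n}{j}}{\binom{j}{m}}
\]
by reducing it to a known Vandermonde-type identity. By contrast, you \emph{derive} the formula: you recognize the right-hand side of Corollary~\ref{cor:3.6} as the $m$-fold backward difference in the index, invert that operator by the standard generating-function identity $(1-x)^{-m}(1-x)^{m}=1$, and then determine the boundary contribution from $i<m$ by evaluating at $z=0$ (using $f_{m,k}(z)=2^{k}$ for $k<m$ and \eqref{1.2b}). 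Your argument explains \emph{why} the coefficients $\binom{n-i+m-1}{m-1}$ appear --- they are the Taylor coefficients of $(1-x)^{-m}$ --- and it avoids any explicit binomial-sum evaluation, trading it for the clean $z=0$ trick. The paper's approach, on the other hand, is self-contained from the definition and does not rely on Corollary~\ref{cor:3.6} at all.
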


\begin{proof}
By taking the derivative of the definition \eqref{1.2}, we see that the second
term on the right of \eqref{3.10} becomes
\begin{equation}\label{3.11}
\sum_{i=m}^{n}\frac{\binom{n-i+m-1}{m-1}}{\binom{i}{m}}
\sum_{j=m}^i\binom{i}{j}\binom{j}{m}z^{\binom{j}{m}}
=\sum_{j=m}^n\left(\sum_{i=m}^{n}\frac{\binom{n-i+m-1}{m-1}}{\binom{i}{m}}
\binom{i}{j}\right)\binom{j}{m}z^{\binom{j}{m}},
\end{equation}
where we have extended the range of $j$ by including zero-terms. We now claim
that the inner sum on the right of \eqref{3.11} is equal to
$\binom{n}{j}/\binom{j}{m}$. Then \eqref{3.11} together with the first term
on the right of \eqref{3.10} is $f_{m,n}(z)$ by definition, which was to be
shown.

To prove the claim, we first note that it is trivially true when $n<j$ since
both sides of the identity vanish. We therefore assume $n\geq j\geq m\geq 1$.
By expanding all the binomial coefficients and rearranging the factorial terms,
we see upon shifting the summation that the identity in question is equivalent
to
\[
\sum_{i=0}^{n-m}\binom{n-i-1}{m-1}\binom{i}{j-m} = \binom{n}{j}.
\]
But this is a known identity; see, e.g., \cite[Eq.~(3.3)]{Go}. The proof is now
complete.
\end{proof}

We can use Corollary~\ref{cor:3.6} to show that for a fixed $m\geq 1$ the
ordinary generating function for the sequence $(f_{m,n}(z))_{n\geq 0}$, as
given in \eqref{2.2}, satisfies a linear partial differential equation of order
$m$. More precisely, using the notation
\begin{equation}\label{3.12}
F_{m}(z,t)=\sum_{n=0}^{\infty}f_{m,n}(z)t^{n},
\end{equation}
we have the following result.

\begin{corollary}\label{cor:3.8}
Let $m\geq 1$ be an integer, and let $|z|<1$ and $|t|<1/2$. Then
\begin{equation}\label{3.13}
z\frac{\partial F_{m}(z,t)}{\partial z}=(-t)^{m}\sum_{j=0}^{m}\frac{1}{j!}\binom{m}{j}(t-1)^{j}\frac{\partial^{j} F_{m}(z,t)}{\partial t^{j}}.
\end{equation}
\end{corollary}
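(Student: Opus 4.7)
The plan is to lift the ODE-like identity in Corollary~\ref{cor:3.6} to a PDE for the generating function by multiplying by $t^n$ and summing. First I would note that Corollary~\ref{cor:3.6} gives $zf'_{m,n}(z)=\binom{n}{m}\sum_{i=0}^{m}\binom{m}{i}(-1)^{i}f_{m,n-i}(z)$ for $n\geq m$, and that the identity continues to hold trivially for $0\leq n<m$: the left-hand side vanishes because $f_{m,n}(z)=2^n$ is constant, and the right-hand side vanishes because of the factor $\binom{n}{m}$. So we may sum over all $n\geq 0$ after multiplying by $t^n$. The left-hand side then yields $z\,\partial F_m(z,t)/\partial z$ by \eqref{3.12}, and the analyticity estimates used in Proposition~\ref{prop:2.1} and Corollary~\ref{cor:2.2} justify all the interchanges of summation on the range $|z|<1$, $|t|<1/2$.

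For the right-hand side, after swapping the order of summation I would get
\[
\sum_{i=0}^{m}\binom{m}{i}(-1)^{i}\sum_{n\geq 0}\binom{n}{m}f_{m,n-i}(z)\,t^{n}.
\]
The key observation is the operator identity
\[
\binom{n}{m}t^{n}=\frac{t^{m}}{m!}\frac{d^{m}}{dt^{m}}\,t^{n},
\]
which lets me rewrite the inner sum as $\frac{t^m}{m!}\partial_t^{m}\bigl[t^{i}F_{m}(z,t)\bigr]$, since $\sum_{n\geq 0}f_{m,n-i}(z)t^{n}=t^{i}F_{m}(z,t)$. Pulling $\frac{t^m}{m!}\partial_t^m$ out of the finite sum over $i$ and using $\sum_{i=0}^{m}\binom{m}{i}(-t)^{i}=(1-t)^{m}$ collapses the $i$-sum and reduces everything to
\[
z\,\frac{\partial F_{m}(z,t)}{\partial z}=\frac{t^{m}}{m!}\frac{\partial^{m}}{\partial t^{m}}\!\Bigl[(1-t)^{m}F_{m}(z,t)\Bigr].
\]

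Finally I would apply the Leibniz rule to this $m$-th derivative of a product. Since $\frac{\partial^{m-j}}{\partial t^{m-j}}(1-t)^{m}=(-1)^{m-j}\frac{m!}{j!}(1-t)^{j}$, the Leibniz expansion yields
\[
\frac{t^{m}}{m!}\sum_{j=0}^{m}\binom{m}{j}(-1)^{m-j}\frac{m!}{j!}(1-t)^{j}\,\frac{\partial^{j}F_{m}}{\partial t^{j}},
\]
and the sign identity $(-1)^{m-j}(1-t)^{j}=(-1)^{m}(t-1)^{j}$ converts the prefactor $t^{m}(-1)^{m}$ into $(-t)^{m}$, giving exactly \eqref{3.13}. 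There is no real obstacle here; the whole argument is essentially the assembly of Corollary~\ref{cor:3.6}, the operator relation $\binom{n}{m}t^n=(t^m/m!)\partial_t^m t^n$, and the Leibniz rule, with only the sign bookkeeping and the convergence justification (already provided by Corollary~\ref{cor:2.2}) requiring care.
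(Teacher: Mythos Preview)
Your proof is correct, and in fact cleaner than the paper's. Both arguments start from Corollary~\ref{cor:3.6} and both end at \eqref{3.13}, but the routes in between differ. The paper works directly with the right-hand side of \eqref{3.13}: it expands each $(t-1)^{j}\partial_{t}^{j}F_{m}$ as a double sum in $n$ and an auxiliary index $i$, rearranges the triple sum, and then evaluates the innermost sum via a Vandermonde-type identity (their \eqref{3.15}) to match what Corollary~\ref{cor:3.6} gives for $z\partial_{z}F_{m}$. Your approach instead passes through the compact intermediate form
\[
z\,\frac{\partial F_{m}}{\partial z}=\frac{t^{m}}{m!}\,\frac{\partial^{m}}{\partial t^{m}}\bigl[(1-t)^{m}F_{m}(z,t)\bigr],
\]
obtained from the operator identity $\binom{n}{m}t^{n}=\frac{t^{m}}{m!}\partial_{t}^{m}t^{n}$, and then reaches \eqref{3.13} by a single application of the Leibniz rule. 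This bypasses the combinatorial identity \eqref{3.15} entirely and yields, as a bonus, a more compact equivalent form of the PDE. The paper's computation, on the other hand, makes the coefficient-level matching with Corollary~\ref{cor:3.6} completely explicit. One small point worth making explicit in your write-up: when you identify $\sum_{n\ge 0}f_{m,n-i}(z)t^{n}$ with $t^{i}F_{m}(z,t)$ you are tacitly using the convention $f_{m,k}=0$ for $k<0$; this is harmless because those terms carry $t^{n}$ with $n<i\le m$ and are annihilated by $\partial_{t}^{m}$ anyway, but it is worth a sentence.
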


\begin{proof}
By differentiating \eqref{3.12} $j$-times with respect to $t$ and multiplying
the result by $(-t)^m(t-1)^j$, we get
\begin{align*}
(-t)^{m}(t-1)^{j}&\frac{\partial^{j} F_{m}(z,t)}{\partial t^{j}}
= \sum_{i=0}^j\binom{j}{i}(-1)^{m-i}t^{j-i+m}
\sum_{n=j}^{\infty}\frac{n!}{(n-j)!}f_{m,n}(z)t^{n-j}\\
&= \sum_{i=0}^j\binom{j}{i}(-1)^{m-i}
\sum_{n=j+m-i}^{\infty}\frac{(n-m+i)!}{(n-m+i-j)!}f_{m,n-m+i}(z)t^{n}.
\end{align*}
Next we multiply both sides by $\binom{m}{j}/j!$ and sum over all
$j=0,1,\ldots,m$. We also use the identity
\[
\binom{m}{j}\binom{j}{i} = \binom{m}{i}\binom{m-i}{j-i},
\]
which is easy to verify. Then we get
\begin{align}
&(-t)^m\sum_{j=0}^m\frac{1}{j!}\binom{m}{j}(t-1)^{j}
\frac{\partial^{j}}{\partial t^{j}}F_{m}(z,t) \label{3.14}\\
&= \sum_{j=0}^m\binom{m}{j}\sum_{i=0}^j\binom{j}{i}\sum_{n=j+m-i}^{\infty}
\binom{n-m+i}{n-m+i-j}(-1)^{m-i}f_{m,n-m+i}(z)t^n\nonumber \\
&= \sum_{n=0}^{\infty}\sum_{i=0}^m\binom{m}{i}
\bigg(\sum_{j=i}^m\binom{m-i}{j-i}\binom{n-m+i}{n-m+i-j}\bigg)
(-1)^{m-i}f_{m,n-m+i}(z)t^n.\nonumber
\end{align}
Now the inner-most sum can be rewritten as
\begin{equation}\label{3.15}
\sum_{j=0}^{m-i}\binom{m-i}{j}\binom{n-(m-i)}{n-m-j} = \binom{n}{n-m},
\end{equation}
where we have used a known evaluation; see, e.g., \cite[Eq.~(3.4)]{Go}.

On the other hand, differentiating \eqref{3.12} with respect to $z$ and using
Corollary~\ref{cor:3.6}, we get
\begin{align*}
z\frac{\partial}{\partial z}F_{m}(z,t) &= \sum_{n=0}^{\infty}zf_{m,n}'(z)t^n\\
&= \sum_{n=0}^{\infty}\binom{n}{m}\sum_{i=0}^m\binom{m}{i}(-1)^if_{m,n-i}(z)t^n\\
&= \sum_{n=0}^{\infty}\binom{n}{m}\sum_{i=0}^m\binom{m}{i}(-1)^{m-i}if_{m,n-m+i}(z)t^n.
\end{align*}
Finally, combining this with \eqref{3.14} and \eqref{3.15}, we immediately get
the desired identity \eqref{3.13}, which completes the proof.
\end{proof}

\section{Bounds on the roots of $f_{m,n}(z)$}

In \cite{BDM3} it was shown that the roots of each polynomial $f_n(z)$, as
defined by \eqref{1.1}, all lie in the annulus
\begin{equation}\label{4.0}
\frac{2}{n} < |z| < 1+\frac{3}{n}\log{n},\qquad (n\geq 3).
\end{equation}
In this section we are going to extend this to the polynomials $f_{m,n}(z)$,
for all $m\geq 3$. As was done in \cite{BDM3}, we will prove more general
results for both the upper and the lower bounds. This section will end with
a few remarks and conjectures on real roots.

\begin{proposition}\label{prop:4.1}
Let $m\geq 3$ be an integer, and consider a polynomial
\begin{equation}\label{4.1}
f(z):=z^{\binom{n}{m}}+\sum_{k=0}^{n-1}a_{k}z^{b_{k}},
\end{equation}
where the numbers $a_{k}\in{\mathbb C}$ and integers $b_{k}\geq 0$ satisfy the
conditions
\begin{equation}\label{4.2}
|a_{k}|\leq \binom{n}{k},\quad 0\leq b_{k}\leq \frac{k}{n-m}\binom{n-1}{m}\quad\mbox{for}\quad k\in\{0,1,\ldots,n-1\}.
\end{equation}
Then for $n\geq 6m+1$ all roots of the polynomial $f(z)$ lie on the disc defined
by
\begin{equation}\label{4.3}
|z| < 1+\frac{m!}{(n-m)^{m-2}}.
\end{equation}
\end{proposition}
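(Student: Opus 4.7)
I would apply the standard Cauchy-type root-location argument: show $f(z)\neq 0$ for $|z|\geq R := 1 + m!/(n-m)^{m-2}$. For $|z| = r \geq R$ the triangle inequality gives
\[
|f(z)| \;\geq\; r^{\binom{n}{m}} - \sum_{k=0}^{n-1} |a_k|\,r^{b_k},
\]
so it suffices to make the right-hand side positive. Using $|a_k|\leq\binom{n}{k}$, $b_k \leq k\binom{n-1}{m}/(n-m)$, and $r \geq 1$, the subtracted sum is bounded above by $\sum_{k=0}^{n-1}\binom{n}{k}\,s^k = (1+s)^n - s^n$, where $s := r^{\binom{n-1}{m}/(n-m)}$.

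The key identity $\binom{n}{m} = \frac{n}{n-m}\binom{n-1}{m}$ gives $r^{\binom{n}{m}} = s^n$, so the target inequality becomes $2s^n > (1+s)^n$, equivalently $(1+1/s)^n < 2$. Since $(1+1/s)^n$ is decreasing in $s$ and $s$ is increasing in $r$, it suffices to verify this at $r = R$. Using $\ln(1+x)\leq x$ at $x = 1/s$, a sufficient condition is $s \geq n/\ln 2$.

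It remains to bound $s = (1+\epsilon)^{\binom{n-1}{m}/(n-m)}$ from below, where $\epsilon := m!/(n-m)^{m-2}$. The hypothesis $n \geq 6m+1$ (with $m\geq 3$) implies $\epsilon \leq 1$ via an elementary comparison of $m!$ with $(5m+1)^{m-2}$, so $\ln(1+\epsilon) \geq \epsilon/2$. Substituting and cancelling the binomial factors then yields
\[
\ln s \;\geq\; \frac{\binom{n-1}{m}\,m!}{2(n-m)\,(n-m)^{m-2}}
\;=\; \frac{(n-1)(n-2)\cdots(n-m+1)}{2(n-m)^{m-2}}
\;\geq\; \frac{n-m+1}{2},
\]
using $(n-1)\cdots(n-m+1) \geq (n-m+1)^{m-1}$ at the end. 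The remaining numerical check, $(n-m+1)/2 \geq \ln(n/\ln 2)$, holds comfortably for $n \geq 6m+1$ since the left side grows linearly in $n$ while the right grows only logarithmically. The main obstacle is the bookkeeping in the binomial-coefficient manipulations and verifying that the single hypothesis $n \geq 6m+1$ is strong enough to secure both $\epsilon \leq 1$ and the final numerical inequality; no tool beyond this Cauchy-type comparison and elementary logarithm/binomial estimates is needed.
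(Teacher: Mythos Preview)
Your proposal is correct and follows essentially the same route as the paper: both reduce via the triangle inequality and the identity $\binom{n}{m}=\tfrac{n}{n-m}\binom{n-1}{m}$ to the single inequality $2s^n>(1+s)^n$ with $s=(1+\varepsilon)^{\frac{1}{n}\binom{n}{m}}$, and then dispatch it with elementary logarithm bounds. The only difference is in the numerical constants---the paper uses $\exp(\tfrac{5}{6}x)<1+x$ on $(0,2/5)$ (hence needs $\varepsilon\le 2/5$), while you use $\ln(1+\varepsilon)\ge\varepsilon/2$ on $(0,1]$; your weaker constraint $\varepsilon\le 1$ is easier to verify and leaves the final inequality $(n-m+1)/2\ge\ln(n/\ln 2)$ with even more slack.
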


\begin{proof}
We follow the ideas used in \cite[Proposition 4.2]{BDM3}, modified as
necessary. More precisely, we wish to show that $|f(z)|>0$ for all $z$ with
sufficiently large modulus. To do so, we consider the number $(1+\varepsilon)z$
with $|z|=1$ and $\varepsilon>0$, which will be specified later. Using our
assumptions, we have the following chain of equations and inequalities:
\begin{align*}
|f((1+\varepsilon)z)|&=\left|((1+\varepsilon)z)^{\binom{n}{m}}
+\sum_{k=0}^{n-1}a_{k}((1+\varepsilon)z)^{b_{k}}\right|\\
&\geq (1+\varepsilon)^{\binom{n}{m}}-\sum_{k=0}^{n-1}a_{k}(1+\varepsilon)^{b_{k}}\\
&\geq (1+\varepsilon)^{\binom{n}{m}}-\sum_{k=0}^{n-1}\binom{n}{k}
\left((1+\varepsilon)^{\frac{1}{n-m}\binom{n-1}{m}}\right)^{k}\\
&=(1+\varepsilon)^{\binom{n}{m}}-\left[\left(1+(1+\varepsilon)^{\frac{1}{n-m}\binom{n-1}{m}}\right)^{n}
-(1+\varepsilon)^{\frac{n}{n-m}\binom{n-1}{m}}\right]\\
&=(1+\varepsilon)^{\binom{n}{m}}
-\left[\left(1+(1+\varepsilon)^{\frac{1}{n}\binom{n}{m}}\right)^{n}
-(1+\varepsilon)^{\binom{n}{m}}\right]\\
&=2(1+\varepsilon)^{\binom{n}{m}}
-\left(1+(1+\varepsilon)^{\frac{1}{n}\binom{n}{m}}\right)^{n},
\end{align*}
where in the penultimate equality we have used the identity
$\frac{1}{n-m}\binom{n-1}{m}=\frac{1}{n}\binom{n}{m}$. Thus, in order to get
the desired result it is enough to prove the inequality
\begin{equation}\label{4.3a}
2(1+\varepsilon)^{\binom{n}{m}}
>\left(1+(1+\varepsilon)^{\frac{1}{n}\binom{n}{m}}\right)^{n}.
\end{equation}
Raising both sides to the power $1/n$ and rewriting, we see that \eqref{4.3a} is
equivalent to
\begin{equation}\label{4.4}
(2^{\frac{1}{n}}-1)(1+\varepsilon)^{\frac{1}{n}\binom{n}{m}}>1.
\end{equation}
In \cite[Lemma 4.1]{BDM3} it was shown that for $x\in (0,2/5)$ we have
\begin{equation}\label{4.5}
\exp\left(\tfrac{5}{6}x\right)<1+x<\exp(x),
\end{equation}
which follows from the Maclaurin expansion of $\exp(x)$. Now we have
\[
2^{\frac{1}{n}}=\exp\left(\frac{1}{n}\log 2\right)>1+\frac{1}{n}\log 2
\]
from the right-hand inequality in \eqref{4.5}. From the left-hand inequality
of \eqref{4.5} we get
\[
(1+\varepsilon)^{\frac{1}{n}\binom{n}{m}}
>\exp\left(\frac{5\varepsilon}{6}\cdot\frac{1}{n}\binom{n}{m}\right).
\]
We thus see that in order to get the result it is enough to prove the inequality
\[
\exp\left(\frac{5\varepsilon}{6}\cdot\frac{1}{n}\binom{n}{m}\right)
\geq \frac{n}{\log 2},
\]
which by taking logarithms of both sides and solving for $\varepsilon$ is
equivalent to
\begin{equation}\label{4.6}
\varepsilon\geq \frac{6}{5}\cdot\frac{n(\log n-\log\log 2)}{\binom{n}{m}}
=:g_{m}(n).
\end{equation}
We observe that for $n\geq 2m+1$ we have $g_m(n)\leq \frac{2}{5}$. Indeed, we have
\[
\frac{1}{n}\binom{n}{m}\geq \frac{1}{n}\cdot\frac{n(n-m)^{m-1}}{m!}
=\frac{(n-m)^{m-1}}{m!}
\]
for $n\geq 2m+1$. Next, under the same assumption on $n$ we have
\[
\frac{6}{5}(\log n-\log\log 2)\leq \frac{n+1}{2}\leq n-m.
\]
From these inequalities we get
\[
g_{m}(n)=\frac{6}{5}\frac{\log n-\log\log 2}{\frac{1}{n}\binom{n}{m}}\leq \frac{n-m}{\frac{(n-m)^{m-1}}{m!}}=\frac{m!}{(n-m)^{m-2}}.
\]
By simple induction one can check that
\[
\frac{m!}{(n-m)^{m-2}}\leq \frac{2}{5}
\]
for $m\geq 6$ and $n\geq 2m+1$. If $m=3$ we need to take $n\geq 19$. For $m=4$
we take $n\geq 12$ and finally, if $m=5$, then $n\geq 12$ is also sufficient. As a
consequence, if we take $\varepsilon=g_{m}(n)$, then the inequality \eqref{4.4}
is satisfied and it holds also for all $\varepsilon>g_{m}(n)$ with $n\geq 6m+1$.
Finally, from our proof it is clear that
\[
g_{m}(n)\leq \frac{m!}{(n-m)^{m-2}}
\]
and thus if $m\geq 6$ and $n\geq 6m+1$, then all roots of the polynomial $f(z)$
satisfy the bound \eqref{4.3}.
\end{proof}

\begin{corollary}\label{cor:4.2}
For any integers $m\geq 3$ and $n\geq 2m+1$, all roots of the polynomial
$f_{m,n}(z)$ satisfy
\[
|z| < 1+\frac{m!}{(n-m)^{m-2}}.
\]
\end{corollary}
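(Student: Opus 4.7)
The plan is to view $f_{m,n}(z)$ as a specialization of the template polynomial in Proposition~\ref{prop:4.1} and invoke that proposition directly. Peeling off the $j=n$ term in the definition \eqref{1.2} gives
\[
f_{m,n}(z) = z^{\binom{n}{m}} + \sum_{k=0}^{n-1}\binom{n}{k}z^{\binom{k}{m}},
\]
so in the notation of \eqref{4.1} we read off $a_k = \binom{n}{k}$ and $b_k = \binom{k}{m}$. The coefficient bound $|a_k|\leq\binom{n}{k}$ holds with equality, leaving nothing to check there.

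The first substantive step is verifying the exponent bound $b_k\leq\frac{k}{n-m}\binom{n-1}{m}$. For $k<m$ the left-hand side vanishes and the inequality is trivial. For $m\leq k\leq n-1$ a direct manipulation of binomial coefficients gives
\[
\frac{\binom{k}{m}(n-m)}{k\binom{n-1}{m}}
= \prod_{i=1}^{m-1}\frac{k-i}{n-i},
\]
which is at most $1$ because each factor satisfies $k-i\leq n-1-i\leq n-i$. Consequently $f_{m,n}$ satisfies the hypotheses \eqref{4.2}, and Proposition~\ref{prop:4.1} delivers the desired bound for all $n\geq 6m+1$.

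The one genuine obstacle is the gap between the hypothesis $n\geq 6m+1$ of Proposition~\ref{prop:4.1} and the hypothesis $n\geq 2m+1$ of the corollary. Inspecting the end of the proof of Proposition~\ref{prop:4.1}, the restriction $n\geq 6m+1$ is introduced only to force $m!/(n-m)^{m-2}\leq 2/5$ uniformly in $m$, so the excluded range is nonempty only for $m\in\{3,4,5\}$ and reduces to the finite list $m=3,\ n\in\{7,\ldots,18\}$; $m=4,\ n\in\{9,10,11\}$; $m=5,\ n=11$. I expect the cleanest way to close the gap is to dispatch these finitely many polynomials by direct numerical root verification, since their degrees $\binom{n}{m}$ remain modest. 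An alternative would be to re-run the estimate in Proposition~\ref{prop:4.1} with a sharper lower bound on $(1+\varepsilon)^{\binom{n}{m}/n}$ valid for $\varepsilon>2/5$, in place of \eqref{4.5}; this would extend the analytic argument over the full range $n\geq 2m+1$ at the cost of some additional bookkeeping.
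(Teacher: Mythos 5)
Your proposal is correct and follows essentially the same route as the paper: specialize Proposition~\ref{prop:4.1} with $a_k=\binom{n}{k}$, $b_k=\binom{k}{m}$, verify the exponent condition \eqref{4.2} by a routine binomial-coefficient comparison, and then close the gap between $n\geq 6m+1$ and $n\geq 2m+1$ by checking the finitely many remaining cases for $m=3,4,5$ numerically, exactly as the authors do. Your explicit product formula $\prod_{i=1}^{m-1}\frac{k-i}{n-i}\leq 1$ and your precise enumeration of the exceptional pairs $(m,n)$ are, if anything, slightly more detailed than the paper's version of the same argument.
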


\begin{proof}
We apply Proposition~\ref{prop:4.1} with $a_k=\binom{n}{k}$ and
$b_k=\binom{k}{m}$. In order to verify \eqref{4.2}, we first rewrite
$\binom{k}{m}=\frac{k}{k-m}\binom{k-1}{m}$ and then note that for $k\leq n$
we have $\frac{1}{k-m}\binom{k-1}{m}\leq \frac{1}{n-m}\binom{n-1}{m}$, which
is easy to see.

Next, from the proof of Proposition~\ref{prop:4.1} we know that if $m\geq 6$
and $n\geq 2m+1$, then \eqref{4.3} holds. Finally, for $m=3, 4, 5$ and
$2m+1\leq n\leq 6m+1$, we verified the statement of the corollary by numerical
computations.
\end{proof}

For the lower bound, as well, we first prove a more general result. This is in
analogy to Proposition~4.4 in \cite{BDM3}.

\begin{proposition}\label{prop:4.3}
For a fixed integer $m\geq 3$ consider the polynomial
\begin{equation}\label{4.7}
G(z):= c_0+c_1z+\sum_{j=2}^{n-m+1}c_jz^{d_j},
\end{equation}
where $n\geq\lceil\sqrt[3]{2}\cdot m^{4/3}+m\rceil$ and $c_0, c_1,\ldots,c_{n-m+1}$
are real or complex coefficients satisfying
\begin{equation}\label{4.8}
|c_0|\geq\binom{n+1}{m-1},\qquad |c_j|\leq\binom{n}{m+j-1}\quad\hbox{for}\quad
1\leq j\leq n-m+1,
\end{equation}
and $d_2,\ldots,d_{n-m+1}$ are positive integers satisfying
\begin{equation}\label{4.9}
d_j\geq (m+1)(j-1),\qquad j=2, 3,\ldots, n-m+1.
\end{equation}
Then the zeros of $G(z)$ lie outside the circle about the origin with radius
$m/(n-m+1)$.
\end{proposition}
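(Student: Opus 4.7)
The plan is to prove contrapositively: show that $|G(z)|>0$ whenever $|z|\le r:=m/(n-m+1)$. By the triangle inequality together with the hypotheses \eqref{4.8} and \eqref{4.9},
\[
|G(z)|\ge |c_0|-\binom{n}{m}\,r-\sum_{j=2}^{n-m+1}\binom{n}{m+j-1}r^{(m+1)(j-1)},
\]
so it suffices to show the right-hand side is strictly positive. The first move is the algebraic identity
\[
\binom{n}{m}\cdot\frac{m}{n-m+1}=\binom{n}{m-1},
\]
combined with Pascal's rule $\binom{n+1}{m-1}=\binom{n}{m-1}+\binom{n}{m-2}$, which together convert the first two terms into $|c_0|-\binom{n}{m-1}\ge \binom{n}{m-2}$. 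Thus the task reduces to establishing
\begin{equation*}
\binom{n}{m-2}>\sum_{j=2}^{n-m+1}\binom{n}{m+j-1}\,r^{(m+1)(j-1)}.
\end{equation*}

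Next I would control this tail by a geometric majorant. A direct computation of the ratio of consecutive terms gives
\[
\frac{\binom{n}{m+j}\,r^{(m+1)j}}{\binom{n}{m+j-1}\,r^{(m+1)(j-1)}}
=\frac{n-m-j+1}{m+j}\cdot\frac{m^{m+1}}{(n-m+1)^{m+1}}\le \frac{m^{m+1}}{(m+2)(n-m+1)^{m}},
\]
which, for the stated range of $n$, is bounded well away from $1$; in particular it is $\le\tfrac12$ once $n$ is even moderately large in terms of $m$. Summing the resulting geometric series bounds the whole tail by a constant (close to $2$) times the leading $j=2$ term,
\[
\binom{n}{m+1}\,r^{m+1}=\binom{n}{m+1}\cdot\frac{m^{m+1}}{(n-m+1)^{m+1}},
\]
so everything reduces to comparing this with $\binom{n}{m-2}$.

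The crux of the argument, and the main obstacle, is checking that the explicit bound
\[
\frac{\binom{n}{m+1}\,r^{m+1}}{\binom{n}{m-2}}=\frac{m^{m-1}}{(m-1)(m+1)}\cdot\frac{(n-m+2)(n-m+1)(n-m)}{(n-m+1)^{m+1}}
\]
is smaller than the reciprocal of the geometric-sum constant. Since the final factor behaves like $(n-m+1)^{-(m-2)}$, the inequality becomes, roughly, $(n-m)^{m-2}\gg m^{m-1}$, and a careful bookkeeping shows that $n\ge \lceil\sqrt[3]{2}\,m^{4/3}+m\rceil$ is precisely what is needed (this is the reason for the somewhat unusual hypothesis on $n$). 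Unwinding the inequalities then gives $|c_0|-\binom{n}{m}r-\text{(tail)}>0$, hence $|G(z)|>0$ on the closed disk of radius $r$, which is what we wanted.
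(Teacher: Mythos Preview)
Your overall architecture---triangle inequality, the identity $\binom{n}{m}\cdot m/(n-m+1)=\binom{n}{m-1}$ together with Pascal's rule to reduce the problem to showing $\binom{n}{m-2}$ dominates the tail---matches the paper exactly. The divergence is in how the tail is controlled. The paper does not use a geometric majorant; instead it invokes the inequality $\binom{n}{m+j+1}\le\binom{n-m-1}{j}\binom{n}{m+1}$ to package the tail as $\binom{n}{m+1}r^{m+1}(1+r^{m+1})^{n-m-1}$, and then uses $r^{m+1}\le r^{4}$ (valid since $m\ge 3$ and $r\le 1$) to force $(1+r^{4})^{n-m-1}<2$. It is precisely this last step that produces the hypothesis $n\ge\lceil\sqrt[3]{2}\,m^{4/3}+m\rceil$: the condition $(n-m-1)m^{4}/(n-m+1)^{4}\le\tfrac12$ is essentially $(n-m)^{3}\ge 2m^{4}$. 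Your geometric-ratio bound is also valid, but it does not naturally recover that constant; in fact your two constraints (ratio $\le\tfrac12$ and $2\binom{n}{m+1}r^{m+1}<\binom{n}{m-2}$) require only roughly $n-m\gtrsim 2^{1/(m-2)}m$, which for large $m$ is \emph{weaker} than the stated hypothesis, so your assertion that $n\ge\lceil\sqrt[3]{2}\,m^{4/3}+m\rceil$ is ``precisely what is needed'' is not accurate for your route. One small algebra slip: in your displayed ratio $\binom{n}{m+1}r^{m+1}/\binom{n}{m-2}$ the prefactor should be $m^{m}/\bigl((m-1)(m+1)\bigr)$, not $m^{m-1}/\bigl((m-1)(m+1)\bigr)$.
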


\begin{proof}
We use the identity $\binom{n+1}{m-1}=\binom{n}{m-2}+\binom{n}{m-1}$ and set
$r=|z|$. Then by the triangle inequality we have with \eqref{4.8} and
\eqref{4.9},
\begin{equation}\label{4.10}
|G(z)|\geq \binom{n}{m-2}+\binom{n}{m-1}-\binom{n}{m}r
-\sum_{j=2}^{n-m+1}\binom{n}{m+j-1}(r^{m+1})^{j-1},
\end{equation}
where we made the assumption that $r\leq 1$. Our goal is to show that when
$r\leq m/(n-m+1)$, then on the right of \eqref{4.10} the second and third terms
combined are nonnegative, and the sum on the right is strictly less than the
first term $\binom{n}{m-2}$.

First we note that $\binom{n}{m-1}-\binom{n}{m}r\geq 0$ is equivalent to
\[
r\leq \binom{n}{m-1}/\binom{n}{m} = \frac{n!m!(n-m)!}{(m-1)!(n-m+1)!n!}
= \frac{m}{n-m+1},
\]
as required. Next we rewrite
\begin{equation}\label{4.11}
\sum_{j=2}^{n-m+1}\binom{n}{m+j-1}(r^{m+1})^{j-1} = \binom{n}{m+1}r^{m+1}
\sum_{j=0}^{n-m-1}\frac{\binom{n}{m+j+1}}{\binom{n}{m+1}}(r^{m+1})^j.
\end{equation}
It is easy to verify that
\[
\binom{n}{m+j+1} \leq \binom{n-m-1}{j}\binom{n}{m+1}\qquad\Longleftrightarrow
\qquad 1 \leq \binom{m+j+1}{j},
\]
which is certainly true for all $j\geq 0$. Hence with \eqref{4.11} we get
\begin{align}
\sum_{j=2}^{n-m+1}\binom{n}{m+j-1}(r^{m+1})^{j-1}
&\leq \binom{n}{m+1}r^{m+1}\sum_{j=0}^{n-m-1}\binom{n-m-1}{j}(r^{m+1})^j\label{4.12}\\
&= \binom{n}{m+1}r^{m+1}\left(1+r^{m+1}\right)^{n-m-1}.\nonumber
\end{align}
Next we note that
\begin{align*}
\frac{1}{(1+r^{m+1})^{n-m-1}}
&> (1-r^{m+1})^{n-m-1} \geq 1-(n-m-1)r^{m+1},\\
&\geq 1-(n-m-1)r^4 \geq 1-(n-m-1)\frac{m^4}{(n-m+1)^4}
\end{align*}
for $m\geq 3$ and $r\leq m/(n-m-1)$. Hence
\[
\left(1+r^{m+1}\right)^{n-m-1}<2\quad\hbox{when}\quad
(n-m-1)\frac{m^4}{(n-m+1)^4}\leq\frac{1}{2}.
\]
But this holds when
\[
(n-m)\frac{m^4}{(n-m)^4}\leq\frac{1}{2}
\quad\Longleftrightarrow\quad n\geq\sqrt[3]{2}\cdot m^{4/3}+m,
\]
consistent with the hypotheses of the theorem. Under this condition we have
therefore with \eqref{4.12} that
\[
\sum_{j=2}^{n-m+1}\binom{n}{m+j-1}(r^{m+1})^{j-1} < 2\binom{n}{m+1}r^{m+1}
\leq 2\binom{n}{m+1}\frac{m^4}{(n-m+1)^4},
\]
again using $m\geq 3$ and $r\leq m/(n-m-1)$. Hence we are done if we can show
that
\[
2\binom{n}{m+1}\frac{m^4}{(n-m+1)^4} \leq \binom{n}{m-2}.
\]
Through expansion and cancellation it is easy to see that this inequality is
equivalent to
\begin{equation}\label{4.13}
(m-1)(m+1)(n-m+1)^3 \geq 2(n-m+2)(n-m)m^3.
\end{equation}
Since $(m^2-1)/m^2\geq\frac{8}{9}$ for $m\geq 3$ and
$(n-m+2)(n-m)=(n-m+1)^2-1$, we see that \eqref{4.13} holds when
\[
n-m+1\geq \frac{9}{4}m,\quad\hbox{or}\quad n\geq \frac{13}{4}m-1.
\]
Finally, it is easy to verify that
\[
\lceil\sqrt[3]{2}\cdot m^{4/3}+m\rceil\geq\lceil\tfrac{13}{4}m-1\rceil,\qquad
m=3,4,5,\ldots,
\]
with equality for $m=3, 4$, and 5. The proof is now complete.
\end{proof}

As a consequence of Proposition~\ref{prop:4.3} we get the following result,
which complements Corollary~\ref{cor:4.2}

\begin{corollary}\label{cor:4.4}
Let $m\geq 3$ and $n\geq\lceil\sqrt[3]{2}\cdot m^{4/3}+m\rceil$. Then the zeros of
$f_{m,n}(z)$ lie outside the circle centered at the origin with radius
$m/(n-m+1)$.
\end{corollary}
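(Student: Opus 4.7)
The plan is to recognize $f_{m,n}(z)$ as an instance of the polynomial $G(z)$ in \eqref{4.7} and apply Proposition~\ref{prop:4.3} directly. Since $\binom{j}{m}=0$ for $0\leq j\leq m-1$, the defining sum \eqref{1.2} splits as
\[
f_{m,n}(z)=\sum_{j=0}^{m-1}\binom{n}{j}+\binom{n}{m}z
+\sum_{j=2}^{n-m+1}\binom{n}{m+j-1}z^{\binom{m+j-1}{m}},
\]
where the last sum comes from reindexing the terms with $m+1\leq k\leq n$ of \eqref{1.2} by $j=k-m+1$. I would therefore set
\[
c_0:=\sum_{j=0}^{m-1}\binom{n}{j},\quad c_1:=\binom{n}{m},\quad
c_j:=\binom{n}{m+j-1},\quad d_j:=\binom{m+j-1}{m}
\qquad (2\leq j\leq n-m+1).
\]

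Next I would verify the hypotheses \eqref{4.8}--\eqref{4.9}. Using the Pascal identity $\binom{n+1}{m-1}=\binom{n}{m-2}+\binom{n}{m-1}$, the bound $|c_0|\geq \binom{n+1}{m-1}$ reduces to $\sum_{j=0}^{m-3}\binom{n}{j}\geq 0$, which is trivial for $m\geq 3$. The conditions $|c_j|\leq\binom{n}{m+j-1}$ are met with equality for all $1\leq j\leq n-m+1$.

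The only nontrivial point is \eqref{4.9}, i.e., $\binom{m+j-1}{m}\geq (m+1)(j-1)$ for $2\leq j\leq n-m+1$. Equality holds at $j=2$, and the claim follows by induction on $j$: the ratio of consecutive left-hand sides is $(m+j)/j$, while the ratio of consecutive right-hand sides is $j/(j-1)$, and $(m+j)/j\geq j/(j-1)$ is equivalent to $m(j-1)\geq j$, which is satisfied whenever $m\geq 3$ and $j\geq 2$.

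With all the hypotheses of Proposition~\ref{prop:4.3} confirmed, the bound $|z|\geq m/(n-m+1)$ on the zeros of $f_{m,n}(z)$ follows immediately. There is no genuine obstacle in this proof; the only mildly delicate step is checking the exponent inequality \eqref{4.9}, and that reduces to the short induction above.
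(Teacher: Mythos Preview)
Your proof is correct and follows essentially the same route as the paper's: you rewrite $f_{m,n}(z)$ in the form \eqref{4.7}, verify \eqref{4.8} via Pascal's identity, and establish \eqref{4.9} by induction on $j$, with the key inequality $m(j-1)\geq j$ for $m\geq 3$, $j\geq 2$. The only cosmetic difference is that you phrase the induction step in terms of ratios of consecutive terms, while the paper multiplies through directly; the underlying computation is identical.
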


\begin{proof}
We rewrite \eqref{1.2} as
\begin{align}
f_{m,n}(z) &= 1+\binom{n}{1}+\cdots+\binom{n}{m-2}+\binom{n}{m-1}+\binom{n}{m}z\label{4.14}\\
&\quad +\sum_{j=2}^{n-m+1}\binom{n}{m+j-1}z^{\binom{m+j-1}{m}},\nonumber
\end{align}
and with
\[
c_0:=1+\binom{n}{1}+\cdots+\binom{n}{m-1},\quad
c_j:=\binom{n}{m+j-1},\quad j=1, 2,\ldots, n-m+1,
\]
we see that the condition \eqref{4.8} is satisfied. Next we prove by induction
that
\begin{equation}\label{4.15}
d_j:=\binom{m+j-1}{m} \geq (m+1)(j-1),\qquad j=2, 3, \ldots
\end{equation}
The induction beginning, for $j=2$, is obviously true as an identity. Suppose
now that \eqref{4.15} holds for some $j\geq 2$. Then
\begin{align*}
d_{j+1}&=\binom{m+j}{m}=\binom{m+j}{j}=\frac{m+j}{j}\binom{m+j-1}{j-1} \\
&=\frac{m+j}{j}\binom{m+j-1}{m} \geq \frac{m+j}{j}(m+1)(j-1),
\end{align*}
where the inequality at the end comes from the induction hypothesis. We are
done if we can show that
\begin{equation}\label{4.16}
\frac{m+j}{j}(j-1) \geq j\qquad\hbox{for}\quad j\geq 2 \quad\hbox{and}\quad
m\geq 3,
\end{equation}
since in that case \eqref{4.15} also holds for $j+1$. But \eqref{4.16} is
easily seen to be equivalent to $m(j-1)\geq j$, which is true for all $j\geq 2$
and $m\geq 3$. This proves \eqref{4.15}, which shows that the condition
\eqref{4.9} is also satisfied. Corollary~\ref{cor:4.4} therefore follows from
Proposition~\ref{prop:4.3}.
\end{proof}

We finish this section with a few observations and conjectures on the real
roots of the polynomials $f_{m,n}(z)$ which obviously have to be negative (if
there are any). Furthermore, by Corollary~\ref{cor:4.2}, the negative root with
largest absolute value $>1$ (if it exists) will be very close to $-1$, as $n$
grows and $m$ is fixed. To be more specific, we denote, for integers $m\geq 2$
and $n\geq 1$,
\[
N_m(n) = \left|\{z\in\R : f_{m,n}(z)=0\}\right|.
\]
It turns out that the behavior of the sequence $(N_m(n))_n$ is quite different
according as $m$ is even or odd.

\begin{conjecture}\label{conj:4.5}
If $m\geq 2$ is even, then $N_m(n)\leq\lfloor\frac{n}{m}\rfloor$ for all
$n\geq 1$, with equality in the case $m=2$.
\end{conjecture}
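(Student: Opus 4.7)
The plan is to combine Descartes' rule of signs applied to $f_{m,n}(-z)$ with a Rolle-type induction built on the derivative identity of Corollary~\ref{cor:3.6}. Since $\binom{j}{m}=0$ for $j<m$ and strictly increases with $j$ for $j\ge m$, I would first rewrite
\[
f_{m,n}(-z)=\Bigl(\sum_{j=0}^{m-1}\binom{n}{j}\Bigr)+\sum_{j=m}^{n}(-1)^{\binom{j}{m}}\binom{n}{j}z^{\binom{j}{m}},
\]
so that $N_m(n)$ equals the number of positive real zeros of this polynomial. Descartes' rule then bounds $N_m(n)$ (with the correct parity) by the number of sign changes in the coefficient list, and the signs $(-1)^{\binom{j}{m}}$ can be read off from Lucas' theorem: $\binom{j}{m}$ is odd exactly when every $1$-bit of $m$ is a $1$-bit of $j$. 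For $m=2$ this gives a parity pattern of period $4$, namely $(0,0,1,1)$, and in turn the sign sequence $(+,-,-,+,+,-,-,\dots)$ starting from the constant term; a direct count yields exactly $\lfloor n/2\rfloor$ sign changes, proving $N_2(n)\le\lfloor n/2\rfloor$. To promote this to the equality claimed in the case $m=2$, I would show that a genuine real zero falls between each consecutive pair of dominant terms in $f_{2,n}(-z)$, using the intermediate value theorem together with the annular localisation of Corollaries~\ref{cor:4.2} and~\ref{cor:4.4} (and, if needed, asymptotic information from \cite{GN}).

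For general even $m>2$, Descartes' rule alone overshoots $\lfloor n/m\rfloor$ (for instance at $m=6,n=8$ it counts two sign changes while the conjecture demands $N_6(8)\le 1$), so a sharpening is needed. I would induct on $n$, with base cases $n<m$ trivial since then $f_{m,n}$ is constant. Supposing $f_{m,n}(z)$ has $k$ real negative zeros, Rolle's theorem applied on $(-\infty,0)$ yields $k-1$ real negative zeros of $f_{m,n}'(z)$, and hence $k$ real zeros of $zf_{m,n}'(z)$ in $(-\infty,0]$. By Corollary~\ref{cor:3.6} this last polynomial equals $\binom{n}{m}\sum_{i=0}^{m}(-1)^i\binom{m}{i}f_{m,n-i}(z)$, and the plan would be to bound the real zeros of this signed combination by combining the inductive bound $N_m(n-i)\le\lfloor(n-i)/m\rfloor$ for $i=0,\ldots,m$ with Descartes' parity clause.

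The main obstacle is precisely the last step: the signed combination $\sum_{i=0}^{m}(-1)^i\binom{m}{i}f_{m,n-i}(z)$ is not covered by the absolutely monotonic property of Corollary~\ref{cor:3.2}, and its real zeros are not automatically controlled by those of the individual summands. Handling it cleanly would seem to require a finer analysis of the $2$-adic block structure of the sign pattern $(-1)^{\binom{j}{m}}$ via Lucas' theorem, exploiting the fact that the coefficients $\binom{n}{j}$ change by orders of magnitude between the odd-parity blocks to rule out intermediate real zeros of $f_{m,n}(-z)$ and pair them up via Descartes' parity clause. An alternative route could use the log-concavity from Proposition~\ref{prop:3.3} through the classical theory of multiplier sequences and Laguerre--P\'olya functions, which can translate such coefficient inequalities into zero-counting bounds.
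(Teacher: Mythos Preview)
The statement you are addressing is labelled a \emph{Conjecture} in the paper and is explicitly left open there; the authors write that ``a serious attempt at proving these conjectures would go beyond the scope of this paper'' and offer only heuristic remarks. So there is no paper proof to compare against, and your proposal has to be judged on its own merits.

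Your Descartes' rule argument for the upper bound $N_2(n)\le\lfloor n/2\rfloor$ is correct and clean: the parity of $\binom{j}{2}$ is indeed $4$-periodic with pattern $(0,0,1,1)$, the resulting sign sequence of the coefficients of $f_{2,n}(-z)$ is $+,-,-,+,+,-,-,\dots$, and the count of sign changes does equal $\lfloor n/2\rfloor$. This half of the $m=2$ case is a genuine, if modest, advance over what the paper records. (A small correction: Corollaries~\ref{cor:4.2} and~\ref{cor:4.4} are stated only for $m\ge 3$; for $m=2$ the appropriate reference is the annulus \eqref{4.0} from \cite{BDM3}.)

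The proposal does not, however, resolve the conjecture. The equality $N_2(n)=\lfloor n/2\rfloor$ is the hard direction, already singled out as open in \cite{BDM3}; your intermediate-value sketch is not a proof, since it neither identifies explicit sign-change points of $f_{2,n}$ on $(-\infty,0)$ nor explains why each pair of ``dominant'' terms must force a root rather than a near-miss. For even $m>2$ you yourself name the obstacle, and it is a real one: the identity of Corollary~\ref{cor:3.6} expresses $zf_{m,n}'$ as an alternating combination $\sum_i(-1)^i\binom{m}{i}f_{m,n-i}$, and an upper bound on its real zeros does not follow from the inductive bounds on the individual $N_m(n-i)$; worse, the $i=0$ summand reintroduces the very polynomial whose zero count you are trying to control, so the induction as written is circular. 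The Laguerre--P\'olya route you allude to would require the $f_{m,n}$ to lie in a specific closure of real-rooted polynomials, which is not established and seems implausible given the complex root distributions discussed for $m=2$ in \cite{BDM3}. In short, the upper bound for $m=2$ is proved, but the remainder of the conjecture stays open under your plan just as it does in the paper.
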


For the case $m=2$, this was earlier conjectured in \cite{BDM3}, along with more
detailed observations.

\begin{conjecture}\label{conj:4.6}
Let $m\geq 3$ be odd. Then for sufficiently large $n$,
\[
N_m(n)=\begin{cases}
2\;\hbox{or}\; 3 &\hbox{when}\;\;m\equiv 3\pmod{4},\\
m-1\;\hbox{or}\; m &\hbox{when}\;\;m\equiv 1\pmod{4}.
\end{cases}
\]
In both cases, the sequence $(N_m(n))_n$ is eventually periodic with period
$2^{\nu}$, where $2^{\nu-1}< m < 2^{\nu}$.
\end{conjecture}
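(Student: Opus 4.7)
The plan is to reduce the problem to counting positive real roots of
\[
g_{m,n}(t) := f_{m,n}(-t)
=\sum_{j=0}^{n}\binom{n}{j}(-1)^{\binom{j}{m}}t^{\binom{j}{m}},
\]
and to exploit the arithmetic structure of the signs $(-1)^{\binom{j}{m}}$ together with the zero-localization results from Section~5. By Lucas' theorem, $\binom{j}{m}$ is odd iff every $1$-bit of $m$ appears in the binary expansion of $j$, so the sign sequence $\big((-1)^{\binom{j}{m}}\big)_{j\geq 0}$ is purely periodic in $j$ with period $2^{\nu}$ whenever $2^{\nu-1}<m<2^{\nu}$. This periodicity is the combinatorial source of the conjectured period $2^{\nu}$ in $n$: the sign pattern of the coefficients of $g_{m,n}$ is entirely determined once $n$ is pinned modulo $2^{\nu}$.

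To derive eventual periodicity of $N_m(n)$, I would seek a recursion of the form
\[
f_{m,n+2^{\nu}}(z)=P_n(z)\,f_{m,n}(z)+R_n(z),
\]
where $R_n$ has a controllable effect on the real zeros. A natural starting point is the inverse binomial transform \eqref{2.6}, iterated in blocks of length $2^{\nu}$, or Proposition~\ref{prop:2.4} specialized at roots of unity so that the factor $(1+t)^{n-j}$ interacts cleanly with the periodicity of the sign pattern. Combined with the upper bound $|z|<1+m!/(n-m)^{m-2}$ from Corollary~\ref{cor:4.2} and the lower bound $|z|>m/(n-m+1)$ from Corollary~\ref{cor:4.4}, any real root must lie in a shrinking window just inside the interval $(-1,0)$, so tracking how roots move as $n$ advances by $2^{\nu}$ should reduce to controlling a bounded number of analytic branches within each residue class.

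For the precise counts $N_m(n)\in\{2,3\}$ versus $\{m-1,m\}$, I would perform a saddle-point analysis of the generating function \eqref{2.2} at $z=-t$, decomposing the inner sum by residue classes of $j$ modulo $2^{\nu}$. This splits $g_{m,n}(t)$ into $2^{\nu-s}$ ``positive blocks'' and $2^{\nu}-2^{\nu-s}$ ``negative blocks,'' where $s$ is the Hamming weight of $m$. The asymptotic balance between these blocks near $t=1$ is structurally different in the two cases: when $m\equiv 3\pmod 4$ the two lowest bits of $m$ both equal $1$, forcing the negative contributions to concentrate in a single residue class and producing heavy cancellation, hence only $O(1)$ sign changes of $g_{m,n}$ on $(0,\infty)$; when $m\equiv 1\pmod 4$ the negative contributions spread across several residue classes, producing roughly $m$ genuine sign changes. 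Existence of the corresponding real roots would then be established via the intermediate value theorem applied at carefully chosen sample points such as $t=1-c/n$ and $t=m/n$, and the $\pm 1$ fluctuation within each case should be traced to the parity of $\binom{n}{m}$ (which determines the sign of $g_{m,n}$ as $t\to\infty$), itself periodic in $n\bmod 2^{\nu}$ by Kummer's theorem.

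The main obstacle, and the reason this remains a conjecture, is that Descartes' rule of signs is far too coarse here: the number of sign changes of $g_{m,n}(t)$ grows linearly in $n$, while the conjecture asserts that only boundedly many of them correspond to actual real zeros. Nearly all potential real zeros counted by the sign changes must therefore pair off as complex conjugates, and to rule them out rigorously one needs a uniform asymptotic description of $g_{m,n}(t)$ on the full interval $(0,\infty)$ strong enough to exclude real roots in most of it. Producing such asymptotics uniformly in $t$, for every residue class $n\bmod 2^{\nu}$, and tracking the dependence on the binary arithmetic of $m$ carefully enough to yield the dichotomy between $m\equiv 1$ and $m\equiv 3\pmod 4$, is the deepest part of the program and the step I would expect to require the bulk of the work.
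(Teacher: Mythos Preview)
This statement is explicitly labeled a \emph{conjecture} in the paper, and the paper does not attempt to prove it. Following the conjecture the authors merely offer three informal remarks: (i) for odd $m$ the top two exponents $\binom{n}{m}$ and $\binom{n-1}{m}$ always have opposite parity; (ii) heuristically there is a real root near $-m/n$ and, for odd $m$, another near $-\big(\tfrac{m+1}{n}\big)^{1/m}$; and (iii) for odd $m$ one has $f_{m,n}(-1)>0$, so whenever $\binom{n}{m}$ is odd there must be an additional real root to the \emph{left} of $-1$, and the parity of $\binom{n}{m}$ is periodic in $n$ with period $2^{\nu}$. Your proposal is likewise not a proof but a program, and you say so yourself in the final paragraph; on that point you and the paper agree.

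Where your heuristics overlap with the paper's is in tracing the period $2^{\nu}$ to the parities of binomial coefficients (the paper invokes this for $\binom{n}{m}$, you for the full sign string $(-1)^{\binom{j}{m}}$ via Lucas). However, several of your intermediate claims are off. First, the assertion that ``any real root must lie in a shrinking window just inside the interval $(-1,0)$'' contradicts the paper's remark (iii): Corollary~\ref{cor:4.2} only gives $|z|<1+m!/(n-m)^{m-2}$, and in fact for odd $\binom{n}{m}$ there \emph{is} a real root to the left of $-1$; this root is precisely what accounts for the $\pm 1$ fluctuation in $N_m(n)$. Second, in your block count the labels are reversed: among $j\in\{0,\dots,2^{\nu}-1\}$ there are $2^{\nu-s}$ values with $\binom{j}{m}$ odd (hence \emph{negative} sign), not positive. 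Third, the dichotomy you propose between $m\equiv 1$ and $m\equiv 3\pmod 4$ via ``negative contributions concentrate in a single residue class'' does not hold in general (e.g.\ $m=11=1011_2$ has two negative residue classes, yet $11\equiv 3\pmod 4$); the distinction between the two cases is more delicate than the low-order bits alone. Finally, the recursion $f_{m,n+2^{\nu}}(z)=P_n(z)f_{m,n}(z)+R_n(z)$ you hope for has no evident source in the identities of Sections~2--4, and Proposition~\ref{prop:2.4} at roots of unity does not obviously produce it. So while your overall diagnosis---that Descartes is too coarse and uniform asymptotics are needed---is sound and matches the paper's stance, the concrete mechanism you sketch for the $1$-vs-$3\pmod 4$ split would need to be rethought.
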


Although a serious attempt at proving these conjectures would go beyond the
scope of this paper, we can make the following remarks.

1. The differences between even and odd $m$ can be partly explained by
considering the two highest powers of $z$ in $f_{n,m}(z)$, namely those with
exponents $\binom{n}{m}$ and $\binom{n-1}{m}$, as they would dominate the
behavior of the polynomial beyond, or close to, $z=-1$. In this connection one
can show that, when $m$ is odd, then $\binom{n}{m}$ and $\binom{n-1}{m}$ never
have the same parity, while in the opposite case they are both odd at least
once in each period of length $2^{\nu}$.

2. Applying methods similar to those used in the proofs in this section, it
should be possible to show the following for a fixed $m\geq 2$:

The polynomial $f_{n,m}(z)$ always has a root close to $-m/n$ for sufficiently
large $n$, and when $m$ is odd, there is another root close to
$(\frac{m+1}{n})^{1/m}$.

We obtained these heuristics by considering
only the constant and linear terms in $f_{n,m}(z)$ in the first case, and only
the linear and the following term (namely $\binom{n}{m+1}z^{m+1}$) in the
second case.

3. When $m$ is odd, it can be shown that $f_{m,n}(-1)>0$ for all $n$, which is
not the case for even $m$. It then follows that, when $\binom{n}{m}$ is odd,
there must be at least one more root to the left of $-1$. Since the parity of
$\binom{n}{m}$ is periodic with period $2^{\nu}$, this will be a partial
explanation of Conjecture~\ref{conj:4.6}.

\medskip
Much of the remarks made above depends on the behavior of the sequence
$(f_{m,n}(-1))_n$ and on the parities of the binomial coefficients
$\binom{j}{m}$. These arithmetic properties are studied, and applied in
different settings, in a forthcoming paper.

\end{document}